\numberwithin{equation}{section}
\numberwithin{figure}{section}
\theoremstyle{plain}
\newtheorem{thm}{\protect\theoremname}[section]
  \theoremstyle{plain}
  \newtheorem{lem}[thm]{\protect\lemmaname}
  \theoremstyle{plain}
  \newtheorem{prop}[thm]{\protect\propositionname}
  \theoremstyle{remark}
  \newtheorem*{rem*}{\protect\remarkname}
  \theoremstyle{definition}
  \newtheorem{example}[thm]{\protect\examplename}
\newtheorem*{thmA*}{Theorem}
\DeclareMathOperator{\SC}{SC}
  \providecommand{\examplename}{Example}
  \providecommand{\lemmaname}{Lemma}
  \providecommand{\propositionname}{Proposition}
  \providecommand{\remarkname}{Remark}
\providecommand{\theoremname}{Theorem}
\begin{document}

\title[The quantization for Markov-type measures]{The quantization for Markov-type measures on a class of ratio-specified
graph directed fractals}

\author{Marc Kesseb\"ohmer}

\address{Marc Kesseb\"ohmer, Fachbereich 3 -- Mathematik und Informatik,
Universität Bremen, Bibliothekstr. 1, Bremen 28359, Germany}

\email{mhk@math.uni-bremen.de}

\author{Sanguo Zhu}

\address{Sanguo Zhu, School of Mathematics and Physics, Jiangsu University
of Technology, Changzhou 213001, China}

\email{sgzhu@jsut.edu.cn {\rm(Corresponding author)}}

\thanks{The second author was supported by the China Schoolarship Council,
File No. 201308320049.}
\begin{abstract}
We study the asymptotic quantization error of order $r$ for Markov-type
measures $\mu$ on a class of ratio-specified graph directed fractals.
We show that the quantization dimension of $\mu$ exists and determine
its exact value $s_{r}$ in terms of spectral radius of a related
matrix. We prove that the $s_{r}$-dimensional lower quantization
coefficient of $\mu$ is always positive. Moreover, inspired by Mauldin-Williams's
work on the Hausdorff measure of graph directed fractals, we establish
a necessary and sufficient condition for the $s_{r}$-dimensional
upper quantization coefficient of $\mu$ to be finite. 
\end{abstract}

\keywords{Markov-type measures, graph directed fractals, quantization dimensions,
quantization coefficients.}

\subjclass[2000]{{28A75,\;28A80,\;94A15} }

\maketitle

\section{Introduction}

In this paper, we study the asymptotics of the quantization error
for Markov-type measures on a class of ratio-specified graph directed
fractals. One of the main mathematical aims of the quantization problem
is to study the error in the approximation of a given probability
measure with probability measures of finite support. We refer to \citep{GL:00,GL:01,GL:04,GL:12,PK:01}
for more theoretical results and \citep{PG:98,PG:05} for promising
applications of quantization theory. One may see \citep{GN:98,Za:63}
for its deep background in information theory and engineering technology.
In the following, let us recall some of the crucial definitions and
known results.

\subsection{The upper (lower) quantization dimension and quantization coefficient}

We set $\mathcal{D}_{n}:=\{\alpha\subset\mathbb{R}^{q}:1\leq{\rm card}(\alpha)\leq n\}$
for $n\in\mathbb{N}$. Let $\nu$ be a Borel probability measure on
$\mathbb{R}^{q}$. For every $n\geq1$, the $n$th quantization error
for $\nu$ of order $r$ is defined by (see \citep{GL:00} for a number
of equivalent definitions): 
\begin{eqnarray}
e_{n,r}(\nu):=\inf_{\alpha\in\mathcal{D}_{n}}\bigg(\int d(x,\alpha)^{r}d\nu(x)\bigg)^{1/r}.\label{quanerrordef}
\end{eqnarray}
Here $d(x,\alpha):=\inf_{a\in\alpha}d(x,a)$ and $d(\cdot,\cdot)$
is the metric induced by an arbitrary norm on $\mathbb{R}^{q}$. For
$r\geq1$, $e_{n,r}(\nu)$ agrees with the error in the approximation
of $\nu$ by discrete probability measures supported on at most $n$
points, in the sense of the Wasserstein $L_{r}$-metric \citep{GL:00}.

The convergence rate of $e_{n,r}(\nu)$ is characterized by the upper
and lower quantization dimension of order $r$ as defined below: 
\begin{eqnarray*}
\overline{D}_{r}(\nu):=\limsup_{n\to\infty}\frac{\log n}{-\log e_{n,r}(\nu)},\;\underline{D}_{r}(\nu):=\liminf_{n\to\infty}\frac{\log n}{-\log e_{n,r}(\nu)}.
\end{eqnarray*}
If $\overline{D}_{r}(\nu)=\underline{D}_{r}(\nu)$, the common value
is denoted by $D_{r}(\nu)$ and called the quantization dimension
for $\nu$ of order $r$. For $s>0$, we define the $s$-dimensional
upper and lower quantization coefficient for $\nu$ of order $r$
by (cf. \citep{GL:00,PK:01}) 
\[
\overline{Q}_{r}^{s}(\nu):=\limsup_{n\to\infty}n^{r/s}e_{n,r}(\nu)^{r},\;\;\underline{Q}_{r}^{s}(\nu):=\liminf_{n\to\infty}n^{r/s}e_{n,r}(\nu)^{r}.
\]
According to \citep[Proposition 11.3]{GL:00} (see also \citep{PK:01}),
the upper (lower) quantization dimension is exactly the critical point
at which the upper (lower) quantization coefficient jumps from zero
to infinity. Compared with the dimensions, the coefficients provide
us with more accurate information for the asymptotic properties of
the quantization error whenever they are both positive and finite.
Therefore, it is one of the standard topics in the quantization problem
to examine the finiteness and positivity of the latter. So far, the
upper (lower) quantization coefficient has been well studied for absolutely
continuous probability measures \citep[Theorem 6.2]{GL:00} and some
classes of fractal measures, such as self-similar measures \citep{GL:01,GL:02}
and diadic homogeneous Cantor measures \citep{Kr:08}.

Let $(f_{i})_{i=1}^{N}$ be a family of contractive similitudes on
$\mathbb{R}^{q}$. By \citep{Hut:81}, there exists a unique Borel
probability measure $\nu$ satisfying $\nu=\sum_{i=1}^{N}q_{i}\nu\circ f_{i}^{-1}$
associated with $(f_{i})_{i=1}^{N}$ and a probability vector $(q_{i})_{i=1}^{N}$.
Recall that $(f_{i})_{i=1}^{N}$ is said to satisfy the open set condition
(OSC), if there exists a non-empty bounded open set $U$ such that
$\bigcup_{i=1}^{N}f_{i}(U)\subset U$ and $f_{i}(U)\cap f_{j}(U)=\emptyset$
for all $1\leq i\neq j\leq N$. Graf and Luschgy proved the following
result which often provides us with significant insight into the study
for non-self-similar probability measures: 

\begin{thmA*}[Graf/Luschgy \citep{GL:01,GL:02}] Assume that $(f_{i})_{i=1}^{N}$
satisfies the OSC. Let $\nu$ be the self-similar measure associated
with $(f_{i})_{i=1}^{N}$ and a probability vector $(q_{i})_{i=1}^{N}$.
Let $k_{r}$ be the unique solution of the equation $\sum_{i=1}^{N}(q_{i}s_{i}^{r})^{\frac{k_{r}}{k_{r}+r}}=1$.
Then 
\[
D_{r}(\nu)=k_{r},\;\;0<\underline{Q}_{r}^{k_{r}}(\nu)\leq\overline{Q}_{r}^{k_{r}}(\nu)<\infty.
\]
\end{thmA*}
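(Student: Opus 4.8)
The plan is to exploit the self-similarity identity $\nu=\sum_{i=1}^{N}q_{i}\,\nu\circ f_{i}^{-1}$ together with the scaling relation $d(f_{i}(x),f_{i}(y))=s_{i}\,d(x,y)$ to obtain matching two-sided ``renewal'' estimates for the numbers $\phi_{n}:=e_{n,r}^{r}(\nu)$, and then to propagate these along a well-chosen maximal antichain in the coding tree. Set $\theta:=k_{r}/(k_{r}+r)\in(0,1)$, so that, by the defining equation for $k_{r}$, the numbers $(q_{i}s_{i}^{r})^{\theta}$ form a probability vector. For a finite word $\sigma=\sigma_{1}\cdots\sigma_{m}$ over $\{1,\dots,N\}$ write $f_{\sigma}=f_{\sigma_{1}}\circ\cdots\circ f_{\sigma_{m}}$, $q_{\sigma}=q_{\sigma_{1}}\cdots q_{\sigma_{m}}$, $s_{\sigma}=s_{\sigma_{1}}\cdots s_{\sigma_{m}}$, let $\sigma^{-}$ be the word obtained by deleting the last letter, and note that $\sum_{\sigma\in\Gamma}(q_{\sigma}s_{\sigma}^{r})^{\theta}=1$ for every finite maximal prefix-free set $\Gamma$. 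A direct computation with the exponents gives $\theta r/k_{r}=r/(k_{r}+r)=1-\theta$, which is what makes the estimates close up.

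\textbf{Step 1 (upper renewal bound).} For $n_{1},\dots,n_{N}\geq1$ I would take near-optimal codebooks $\alpha_{i}\in\mathcal{D}_{n_{i}}$ for $\nu$, transport them by $f_{i}$, and form the union $\alpha=\bigcup_{i}f_{i}(\alpha_{i})\in\mathcal{D}_{n_{1}+\cdots+n_{N}}$; since $d(f_{i}(x),\alpha)\leq s_{i}\,d(x,\alpha_{i})$, this yields $\phi_{n_{1}+\cdots+n_{N}}\leq\sum_{i=1}^{N}q_{i}s_{i}^{r}\,\phi_{n_{i}}$. Iterating down the antichain $\Gamma(\varepsilon):=\{\sigma:\,q_{\sigma}s_{\sigma}^{r}\leq\varepsilon<q_{\sigma^{-}}s_{\sigma^{-}}^{r}\}$ and distributing the available code points across $\sigma\in\Gamma(\varepsilon)$ roughly in proportion to $(q_{\sigma}s_{\sigma}^{r})^{\theta}$, one gets for the resulting cardinality $n$ a bound $\phi_{n}\leq\sum_{\sigma\in\Gamma(\varepsilon)}q_{\sigma}s_{\sigma}^{r}\,\phi_{\ell_{\sigma}}$; feeding in the induction hypothesis $\phi_{\ell}\lesssim\ell^{-r/k_{r}}$ and using $\sum_{\sigma}(q_{\sigma}s_{\sigma}^{r})^{\theta}=1$ together with $\theta r/k_{r}=1-\theta$ forces $n^{r/k_{r}}\phi_{n}=O(1)$. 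Since $n\mapsto\phi_{n}$ is non-increasing and the cardinalities arising from the $\Gamma(\varepsilon)$ have bounded successive ratios (each $q_{i}s_{i}^{r}$ is bounded away from $0$), one interpolates to all $n$, obtaining $\overline{D}_{r}(\nu)\leq k_{r}$ and $\overline{Q}_{r}^{k_{r}}(\nu)<\infty$.

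\textbf{Step 2 (lower renewal bound --- the crux).} Here I would invoke the OSC. Fix a bounded open set $U$ with $\bigcup_{i}f_{i}(U)\subset U$ and the $f_{i}(U)$ pairwise disjoint, and let $J$ be the attractor of $(f_{i})_{i=1}^{N}$, which carries $\nu$. The OSC yields a uniform bounded-overlap property: for a maximal antichain $\Gamma$, every ball of radius comparable to $\mathrm{diam}\,f_{\sigma}(\overline{U})$ meets only a bounded number ($M$, say, depending on $U$ and the norm) of the sets $f_{\tau}(\overline{U})$, $\tau\in\Gamma$. Using this I would prove the lower counterpart of the inequality of Step 1: there are $n_{0}\in\mathbb{N}$ and $c>0$ such that for every $n\geq n_{0}$ and every $n$-optimal codebook $\alpha$, after discarding a bounded number of points one can assign to each child $i$ a subset of $\alpha$ of size $n_{i}$ with $\sum_{i}n_{i}\leq Mn$ and $\int d(x,\alpha)^{r}\,d\nu\geq c\sum_{i=1}^{N}q_{i}s_{i}^{r}\,\phi_{n_{i}}$ --- the point of the OSC being precisely that code points cannot be shared among distinct cylinders efficiently enough to beat this. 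Iterating this inequality along $\Gamma(\varepsilon)$, controlling the total number of points by the bounded-overlap bound, applying Jensen's inequality with exponent $\theta$, and once more using $\sum_{\sigma}(q_{\sigma}s_{\sigma}^{r})^{\theta}=1$ with $\theta r/k_{r}=1-\theta$, gives $n^{r/k_{r}}\phi_{n}\geq c'>0$ for all large $n$, hence $\underline{D}_{r}(\nu)\geq k_{r}$ and $\underline{Q}_{r}^{k_{r}}(\nu)>0$.

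\textbf{Step 3 (conclusion).} Combining Steps 1 and 2 yields $\underline{D}_{r}(\nu)=\overline{D}_{r}(\nu)=k_{r}$, so $D_{r}(\nu)=k_{r}$ exists, and $0<\underline{Q}_{r}^{k_{r}}(\nu)\leq\overline{Q}_{r}^{k_{r}}(\nu)<\infty$. I expect essentially all the difficulty to be concentrated in Step 2: converting the OSC into a quantitative, $n$-uniform lower bound on the number of code points any optimal configuration is forced to commit to each non-negligible cylinder. The upper bound of Step 1 and the interpolation in Step 3 are comparatively routine once the antichains $\Gamma(\varepsilon)$ and the allocation of code points are set up correctly.
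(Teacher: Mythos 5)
This statement is not proved in the paper --- it is quoted from Graf--Luschgy as background. The analogous mechanism in the paper is the two-sided estimate (\ref{characterization}) together with Lemma \ref{lem2} and H\"older's inequality, and that is essentially the Graf--Luschgy argument. Your Step 1 (upper bound) matches the standard argument and is fine: transport near-optimal codebooks by the $f_\sigma$, distribute points across $\Gamma(\varepsilon)$ in proportion to $(q_\sigma s_\sigma^r)^\theta$, and use $\theta r/k_r=1-\theta$.

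Step 2 has a genuine gap. You propose a one-level lower renewal inequality $\phi_n\geq c\sum_i q_i s_i^r\,\phi_{n_i}$ with $\sum_i n_i\leq Mn$, where $c\in(0,1)$ and $M>1$ are fixed, and then to iterate it ``along $\Gamma(\varepsilon)$''. The antichain $\Gamma(\varepsilon)$ with $\mathrm{card}\,\Gamma(\varepsilon)\asymp n$ has depth on the order of $\log n$, so iterating produces a prefactor $c^{\,O(\log n)}=n^{-\beta}$ for some $\beta>0$, which destroys the bound $n^{-r/k_r}$ you are after. Equivalently: feeding the induction hypothesis $\phi_\ell\geq c_0\ell^{-r/k_r}$ into one application of your inequality and running H\"older yields a new constant $c\,c_0\,M^{-r/k_r}<c_0$, so the induction is not self-improving. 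And $c$ really is strictly below $1$ in any such decomposition --- one must pay a fixed multiplicative price to replace ``nearest point of $\alpha$'' by ``nearest point of a localized subfamily'' for those $x\in f_i(J)$ whose optimal code point lies outside the $i$-th cylinder --- so this cannot be repaired by sharpening constants.

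The fix, which is what Graf--Luschgy do and what the present paper does in the proof of (\ref{characterization}) via the arguments of \citep[Lemma 3, Lemma 4]{Zhu:08}, is a \emph{one-shot} localization at the scale of the antichain. From the OSC one extracts an integer $L$, independent of $n$ and of $\sigma$, such that for any $\alpha$ with $\mathrm{card}(\alpha)\leq \mathrm{card}\,\Gamma(\varepsilon)$ and any $\sigma\in\Gamma(\varepsilon)$ there is a set $\beta(\sigma)$ with $\mathrm{card}\,\beta(\sigma)\leq L$ and
\[
\int_{J_\sigma} d(x,\alpha)^r\,d\nu \;\geq\; C\int_{J_\sigma} d(x,\beta(\sigma))^r\,d\nu,
\]
the constant $C>0$ depending only on the geometry. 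One then proves the uniform estimate $\int_{J_\sigma} d(x,\beta)^r\,d\nu\geq D\,q_\sigma s_\sigma^r$ valid for \emph{all} $\sigma$ and \emph{all} $\beta$ with at most $L$ elements, using that $\nu(\cdot\mid J_\sigma)$ splits across at least two well-separated grandchildren of comparable mass. Summing over $\sigma\in\Gamma(\varepsilon)$ gives $e_n^r(\nu)\geq CD\sum_{\sigma\in\Gamma(\varepsilon)}q_\sigma s_\sigma^r$ with an $n$-independent constant, and then a single application of H\"older with exponent $\theta$ together with $\sum_\sigma(q_\sigma s_\sigma^r)^\theta=1$ yields $n^{r/k_r}e_n^r(\nu)\geq D'>0$. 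Your Step 3 is then fine once Step 2 is replaced by this argument.
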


\subsection{A class of graph directed fractals and Markov-type measures}

In this subsection, we recall the definitions of a class of graph
directed fractals and Markov-type measures on these sets. One may
see \citep{BED:89,EM:92,MW:88} for more details.

Let $P:=(p_{ij})_{N\times N}$ be a row-stochastic matrix, namely,
$p_{ij}\geq0,1\leq i,j\leq N$, and $\sum_{j=1}^{N}p_{ij}=1,1\leq i\leq N$.
We always assume 
\begin{equation}
{\rm card}(\{1\leq j\leq N:p_{ij}>0\})\geq2\;\;{\rm for\; all}\;\;1\leq i\leq N.\label{cardpij>0}
\end{equation}
We will need the following notations. Set 
\begin{eqnarray*}
 &  & \theta:={\rm empty\; word},\;\;\Omega_{0}:=\{\theta\},\;\Omega_{1}:=\{1,\ldots N\};\\
 &  & \Omega_{k}:=\{\sigma\in\Omega_{1}^{k}:p_{\sigma_{1}\sigma_{2}}\cdots p_{\sigma_{k-1}\sigma_{k}}>0\},\; k\geq2;\\
 &  & \Omega^{*}:=\bigcup_{k\geq0}\Omega_{k},\;\Omega_{\infty}:=\{\sigma\in\Omega_{1}^{\mathbb{N}}:p_{\sigma_{h}\sigma_{h+1}}>0\;\;{\rm for\; all}\;\; h\geq1\}.
\end{eqnarray*}
We denote by $|\sigma|$ the length of $\sigma$, namely, $|\sigma|:=k$
for $\sigma\in\Omega_{k}$ and $|\theta|:=0$. For every word $\sigma=(\sigma_{1},\ldots,\sigma_{n})$
with $n\geq k$ or $\sigma\in\Omega_{\infty}$, we write $\sigma|_{k}:=(\sigma_{1},\ldots,\sigma_{k})$.
If $\sigma,\omega\in\Omega^{*}$ and $(\sigma_{|\sigma|},\omega_{1})\in\Omega_{2}$,
then we define 
\[
\sigma\ast\omega=(\sigma_{1},\sigma_{2},\ldots,\sigma_{|\sigma|},\omega_{1},\ldots,\omega_{|\omega|})\in\Omega_{|\sigma|+|\omega|}.
\]
Let $J_{i},1\leq N$ be non-empty compact subsets of $\mathbb{R}^{q}$
with $J_{i}=\overline{{\rm int}(J_{i})}$ for all $1\leq i\leq N$,
where $\overline{B}$ and ${\rm int}(B)$ respectively denote the
closure and interior in $\mathbb{R}^{q}$ of a set $B\subset\mathbb{R}^{q}$.
For convenience, we always assume that 
\[
{\rm diam}(J_{i})=1\;\;{\rm for\; all}\;\;1\leq i\leq N.
\]
Let $(c_{ij})_{N\times N}$ be a non-negative matrix such that $c_{ij}>0$
if and only if $p_{ij}>0$. For each pair $1\leq i,j\leq N$ with
$p_{ij}>0$, let $T_{ij}$ be a contracting similitude on $\mathbb{R}^{q}$
of contraction ratio $c_{ij}$. Assume that, $T_{ij}(J_{j}),$ $(i,j)\in\Omega_{2}$,
are non-overlapping subsets of $J_{i}$. By \citep[Corollary 3.5]{BED:89}
(see also \citep[Theorem 3]{MW:88}), there exists a unique vector
compact sets $(K_{i})_{i=1}^{N}\subset\prod_{i=1}^{N}J_{i}$ such
that 
\begin{equation}
K_{i}=\bigcup_{j:(i,j)\in\Omega_{2}}T_{ij}(K_{j}),\;\;1\leq i\leq N.\label{invariance1}
\end{equation}
We call $K:=\bigcup_{i=1}^{N}K_{i}$ the recurrent self-similar set
associated with the contracting similitudes $T_{ij},1\leq i,j\leq N$.
One can see that $K$ is also a \emph{map-specified MW-fractal} which
is defined in terms of a directed graph \citep{MW:88}.

Assume that $P$ is irreducible. Let $v=(q_{i})_{i=1}^{N}$ be the
unique normalized positive left eigenvector of $P$ with respect to
the Perron-Frobenius eigenvalue $1$, or equivalently, 
\[
\sum_{i=1}^{N}v_{i}=1,\; v_{i}>0,\;1\leq i\leq N;\;\;\sum_{i=1}^{N}v_{i}p_{ij}=v_{j}.
\]
We accordingly have a unique vector $(\nu_{i})_{i=1}^{N}$ of probability
measures such that 
\begin{equation}
\nu_{i}=\sum_{j:(i,j)\in\Omega_{2}}p_{ij}\nu_{j}\circ T_{ij}^{-1}\label{invariance2}
\end{equation}
and $\nu_{i}$ is supported by $K_{i}$ for each $1\leq i\leq N$.
Hence, we get a Markov-type measure $\nu:=\sum_{i=1}^{N}p_{i}\nu_{i}$
supported on $K$.

Assuming the irreducibility of the corresponding transition matrices
(or strong connectedness of the corresponding graphs), Lindsay has
studied the quantization problem for Markov-type measures on map-specified
graph directed fractals in \citep{LJL:01}; in there he expressed
the quantization dimension $D_{r}$ in terms of temperature functions
and showed that the $D_{r}$-dimensional upper quantization coefficient
is finite. Let us note the following facts:\renewcommand{\labelenumi}{\arabic{enumi}. }

\begin{enumerate} 

\item  the arguments in \citep{LJL:01} depend on the invariance
properties (\ref{invariance1}) and (\ref{invariance2}); these arguments
are not applicable to ratio-specified cases due to the absence of
the invariance properties;

\item the interesting cases, where the transition matrices are reducible,
have not been explored.

\end{enumerate}

In the present paper, we consider the Markov-type measures $\mu$
on a class of \emph{ratio-specified graph directed fractals} $E$.
Mauldin and Williams \citep[Theorem 4]{MW:88} have established a
necessary and sufficient condition for the Hausdorff measure of a
graph directed fractal to be positive and finite. Significantly inspired
by this result, we will establish a necessary and sufficient condition
for the upper and lower quantization coefficient of $\mu$ to be both
positive and finite, allowing the corresponding transition matrices
to be reducible.

Let $J_{i},P=(p_{ij})_{N\times N}$, be given as above. We call $J_{i},$
$1\leq i\leq N$, cylinder sets of order one. For each $1\leq i\leq N$,
let $J_{ij},\;(i,j)\in\Omega_{2}$, be non-overlapping subsets of
$J_{i}$ such that $J_{ij}$ is geometrically similar to $J_{j}$
with ${\rm diam}(J_{ij})/{\rm diam}(J_{j})=c_{ij}$. We call these
sets cylinder sets of order two. Assume that cylinder sets of order
$k$ are determined, namely, for each $\sigma:=\left(i_{1},\ldots,i_{k}\right)\in\Omega_{k}$,
we have a cylinder set $J_{\sigma}$. Let $J_{\sigma\ast i_{k+1}},$
$\sigma\ast i_{k+1}\in\Omega_{k+1}$, be non-overlapping subsets of
$J_{\sigma}$ such that $J_{\sigma\ast i_{k+1}}$ is geometrically
similar to $J_{\sigma}$ with ${\rm diam}(J_{\sigma\ast i_{k+1}})/{\rm diam}(J_{\sigma})=c_{i_{k}i_{k+1}}$.
Inductively, cylinder sets of order $k$ are determined for all $k\geq1$.
The \emph{ratio specified MW-fractal }is then given by 
\[
E:=\bigcap_{k\geq1}\bigcup_{\sigma\in\Omega_{k}}J_{\sigma}.
\]
Note that we only fix the contraction ratios $c_{ij},$ $1\leq i\leq j\leq N$,
and we do not fix the similarity mappings, so a ratio-specified MW-fractal
typically does not enjoy the invariance property (\ref{invariance1})
of $K$.

Let $(\chi_{i})_{i=1}^{N}$ be an arbitrary probability vector with
$\min_{1\leq i\leq N}\chi_{i}>0$. By Kolmogorov consistency theorem,
there exists a unique probability measure $\widetilde{\mu}$ on $\Omega_{\infty}$
such that $\widetilde{\mu}([\sigma])=\chi_{\sigma_{1}}p_{\sigma_{1}\sigma_{2}}\cdots p_{\sigma_{k-1}\sigma_{k}}$
for every $k\geq1$ and $\sigma=(\sigma_{1},\ldots,\sigma_{k})\in\Omega_{k}$,
where $[\sigma]:=\{\omega\in\Omega_{\infty}:\omega|_{|\sigma|}=\sigma\}$.
Let $\pi$ denote the projection from $\Omega_{\infty}$ to $E$:
$\pi(\sigma)=x$, where 
\[
\{x\}:=\bigcap_{k\geq1}J_{\sigma|_{k}},\;\;{\rm for}\;\;\sigma\in\Omega_{\infty}.
\]
To overcome the difficulty caused by the absence of invariance properties,
we assume the following separation property for $E$: there is some
constant $0<t<1$ such that for every $\sigma\in\Omega^{*}$ and distinct
$i_{1},i_{2}\in\Omega_{1}$ with $p_{\sigma_{|\sigma|}i_{l}}>0,$$l=1,2$,
\begin{equation}
d(J_{\sigma\ast i_{1}},J_{\sigma\ast i_{2}}):=\inf\left\{ \left|x-y\right|:x\in J_{\sigma\ast i_{1}},y\in J_{\sigma\ast i_{2}}\right\} \geq t\max\{|J_{\sigma\ast i_{1}}|,|J_{\sigma\ast i_{2}}|\}.\label{g4}
\end{equation}
Here $|A|$ denotes the diameter of a set $A\subset\mathbb{R}^{q}$.
Under this assumption, $\pi$ is a bijection. We consider the image
measure of $\widetilde{\mu}$ under the projection $\pi$: $\mu:=\widetilde{\mu}\circ\pi^{-1}$.
We call $\mu$ a Markov-type measure which satisfies 
\begin{eqnarray}
\mu(J_{\sigma})=\chi_{\sigma_{1}}p_{\sigma_{1}\sigma_{2}}\cdots p_{\sigma_{k-1}\sigma_{k}}\;\;{\rm for}\;\;\sigma=(\sigma_{1},\ldots,\sigma_{k})\in\Omega_{k}.\label{markovmeasure}
\end{eqnarray}
As there are infinitely many similitudes corresponding to given contraction
ratios $c_{ij}$, $\mu$ generally does not enjoy the invariance property
(\ref{invariance2}).

\subsection{Statement of main results}

Before we state our main result, we need to recall some facts on spectral
radius of matrices and some notations on directed graphs.

For $1\leq i,j\leq N$, we define $a_{ij}(s):=(p_{ij}c_{ij}^{r})^{s}$.
Then we get an $N\times N$ matrix $A(s)=(a_{ij}(s))_{N\times N}$.
Let $\psi(s)$ denote the spectral radius of $A(s)$. By \citep[Theorem 2]{MW:88},
$\psi(s)$ is continuous and strictly decreasing. Note that, by the
assumption (\ref{cardpij>0}), $\psi(0)\geq2$; by Perron-Frobenius
theorem, we have, 
\begin{eqnarray*}
\psi(1)\leq\max_{1\leq i\leq N}\sum_{j=1}^{N}a_{ij}(1)<\max_{1\leq i\leq N}\sum_{j=1}^{N}p_{ij}=1.
\end{eqnarray*}
Intermediate-value theorem implies that there exists a unique number
$\xi\in(0,1)$ such that $\psi(\xi)=1$. Thus, for every $r>0$, there
exists a unique positive number $s_{r}$ such that $\psi\left(s_{r}/(s_{r}+r)\right)=1$.

As in \citep{MW:88}, we consider the directed graph $G$ associated
with the transition matrix $(p_{ij})_{N\times N}$. Namely, $G$ has
vertices $1,2,\ldots,N$; there is an edge from $i$ to $j$ if and
only if $p_{ij}>0$. In the following, we will simply denote by $G=\{1,\ldots,N\}$
both the directed graph and its vertex sets. We also write 
\[
b_{ij}(s):=(p_{ij}c_{ij}^{r})^{s/(s+r)},\;\; A_{G,s}:=(b_{ij}(s))_{N\times N};\;\;\Psi_{G}(s):=\psi\left(s/(s+r)\right).
\]
We also refer to an element $\left(i_{1},\ldots,i_{k}\right)\in\Omega_{k}$
as a path in $G$. We call $H\subset G$, with edges inherited from
$G$, a subgraph of $G$. A subgraph $H$ of $G$ is called strongly
connected if for very pair $i_{1},i_{2}\in H$, there exists a path
$\gamma$ in $H$ which begins at $i_{1}$ and ends at $i_{2}$. A
strongly connected component of $G$ refers to a maximal strongly
connected subgraph. Let ${\rm SC}(G)$ denote the set of all strongly
connected components of $G$. For $H_{1},H_{2}\in\SC(G)$, we write
$H_{1}\prec H_{2}$, if there is a path $\gamma=\left(i_{1},\ldots,i_{k}\right)$
in $G$ such that $i_{1}\in H_{1}$ and $i_{k}\in H_{2}$. If we have
neither $H_{1}\prec H_{2}$ nor $H_{2}\prec H_{1}$, then we say $H_{1},H_{2}$
are incomparable.

For every $H\in\SC(G)$, we denote by $A_{H,s}$ the sub-matrix $(b_{ij}(s))_{i,j\in H}$
of $A_{G}(s)$. Let $\Psi_{H}(s)$ be the spectral radius of $A_{H,s}$
and $s_{r}(H)$ be the unique positive number satisfying $\Psi_{H}(s_{r}(H))=1$.
With assumption (\ref{cardpij>0}), one can see by pigeon-hole principle
that $G$ has at least one strongly connected component $H$ with
${\rm card}(H)\geq2$. As our main result, we will prove 
\begin{thm}
\label{mthm1} Assume that (\ref{cardpij>0}) and (\ref{g4}) are
satisfied and let $\mu$ be the Markov-type measure defined in (\ref{markovmeasure}),
and let $s_{r}$ be the unique positive number satisfying $\Psi_{G}(s_{r})=1$.
Then we have, 
\[
D_{r}(\mu)=s_{r}~\mbox{and }~\underline{Q}_{r}^{s_{r}}(\mu)>0.
\]
 Furthermore, $\overline{Q}_{r}^{s_{r}}(\mu)<\infty$ if and only
if $\mathcal{M}:=\{H\in\SC(G):s_{r}(H)=s_{r}\}$ consists only of
incomparable elements; otherwise, we have $\underline{Q}_{r}^{s_{r}}(\mu)=\infty$. 
\end{thm}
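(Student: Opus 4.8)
The plan is to follow the standard two-sided strategy for quantization asymptotics, adapting it to the absence of the invariance properties by exploiting the separation condition \eqref{g4} and the graph-directed structure. For the lower bound on the quantization error (equivalently, positivity of $\underline{Q}_r^{s_r}(\mu)$ and the inequality $\underline{D}_r(\mu)\geq s_r$), I would fix $n$, take an arbitrary $\alpha\in\mathcal{D}_n$, and choose a suitable finite antichain $\Gamma_\ell\subset\Omega^*$ of cylinders whose measures and diameters are comparable in the sense dictated by the eigenvalue equation $\Psi_G(s_r)=1$. Using the separation property one shows that each codeword in $\alpha$ can be ``close'' to only boundedly many cylinders $J_\sigma$, $\sigma\in\Gamma_\ell$; hence for most $\sigma$ the contribution $\int_{J_\sigma} d(x,\alpha)^r\,d\mu$ is bounded below by $c\,\mu(J_\sigma)|J_\sigma|^r$ times $e_{m_\sigma,r}$-type quantities. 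Summing over $\Gamma_\ell$ and optimizing the split of $n$ among the cylinders (a Hölder-type argument exactly as in Graf--Luschgy) yields $e_{n,r}(\mu)^r\geq c\,n^{-r/s_r}$, and since the Perron root of $A_{G,s_r}$ is achieved on some strongly connected component with at least two vertices, the constant is strictly positive.

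For the upper bound I would construct near-optimal codebooks explicitly. Given $n$, pick a finite maximal antichain $\Gamma(n)\subset\Omega^*$ defined by a stopping rule of the form $(\mu(J_\sigma)|J_\sigma|^r)^{s_r/(s_r+r)}\approx 1/n$ (more precisely, one stops a word $\sigma$ as soon as this ``weight'' drops below a threshold), place one point in each $J_\sigma$, $\sigma\in\Gamma(n)$, and estimate $\int d(x,\alpha_n)^r\,d\mu\leq\sum_{\sigma\in\Gamma(n)}\mu(J_\sigma)|J_\sigma|^r$. The submultiplicativity of the weights under concatenation, together with the eigen-relation $\Psi_G(s_r)=1$, bounds $\mathrm{card}(\Gamma(n))$ and the sum simultaneously, giving $\overline{D}_r(\mu)\leq s_r$; combined with the lower bound this proves $D_r(\mu)=s_r$. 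Whether this crude bound also gives $\overline{Q}_r^{s_r}(\mu)<\infty$ is precisely where the component structure of $G$ enters, and this is the delicate part of the theorem.

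The decisive step is the dichotomy governed by $\mathcal{M}=\{H\in\SC(G):s_r(H)=s_r\}$. Here I would borrow the mechanism from Mauldin--Williams \citep[Theorem 4]{MW:88}. If two components $H_1\prec H_2$ both lie in $\mathcal{M}$, then along paths that pass first through $H_1$ and then through $H_2$ the number of cylinders of a given ``weight scale'' grows faster than a single-component count by a polynomial (in the generation) factor: iterating the critical Perron eigenvalue $1$ through two comparable blocks produces a factor like $k$ (the path length) rather than a bounded constant, because a reducible nonnegative matrix with equal spectral radius on two comparable irreducible blocks has Jordan-type growth $k\,\rho^k$ in the relevant matrix powers. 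This forces $\mathrm{card}(\Gamma_\ell)$ to be of order $\ell\cdot(\text{base count})$ along a subsequence, which inflates the quantization error enough that $n^{r/s_r}e_{n,r}(\mu)^r\to\infty$ along that subsequence, i.e. $\underline{Q}_r^{s_r}(\mu)=\infty$. Conversely, if all elements of $\mathcal{M}$ are pairwise incomparable, then every directed path in $G$ meets $\mathcal{M}$ in at most one component, the extra polynomial factor never appears, and the codebook construction above gives a uniform $O(n^{-r/s_r})$ bound, hence $\overline{Q}_r^{s_r}(\mu)<\infty$.

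I expect the main obstacle to be making the ``polynomial blow-up'' quantitative in the reducible case: one must control $\sum_{\sigma\in\Gamma_\ell}(\mu(J_\sigma)|J_\sigma|^r)^{s_r/(s_r+r)}$ from both sides via powers of the block-triangular matrix $A_{G,s_r}$, identifying exactly which comparabilities within $\mathcal{M}$ contribute Jordan blocks of size $\geq 2$ at eigenvalue $1$, and then transferring this combinatorial growth estimate to a genuine lower bound on $e_{n,r}(\mu)$ using the separation property \eqref{g4} (so that distinct cylinders in $\Gamma_\ell$ cannot be served by a shared codepoint). The matrix-analytic input — that a nonnegative block-triangular matrix whose diagonal blocks are irreducible with common Perron root $\rho$ has $\|M^k\|\asymp k^{d-1}\rho^k$ where $d$ is the length of the longest chain of such blocks — is the structural heart of the argument, and matching the resulting count against $n$ requires a careful choice of the stopping rule defining $\Gamma_\ell$.
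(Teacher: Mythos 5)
Your proposal captures, in outline, essentially the same strategy that the paper uses: Graf--Luschgy antichains $\Lambda_{j,r}$ defined by a multiplicative stopping rule on $p_\sigma c_\sigma^r$, the separation property (\ref{g4}) to reduce $e_{\phi_{j,r},r}^r(\mu)$ to $\sum_{\sigma\in\Lambda_{j,r}} p_\sigma c_\sigma^r$ up to constants, H\"older's inequality to pass to the exponent $s_r/(s_r+r)$, and the Mauldin--Williams dichotomy on $\mathcal{M}=\{H\in\SC(G):s_r(H)=s_r\}$ for the coefficient question. The one technical point that you gloss over but which the paper needs to make precise is the lower bound $\underline{Q}_r^{s_r}(\mu)>0$ in the reducible case: the H\"older argument requires $\sum_{\sigma}(p_\sigma c_\sigma^r)^{s_r/(s_r+r)}$ to be bounded \emph{below} uniformly over the antichains, and this fails if you run it over all of $\Lambda_{j,r}$; the paper restricts to the conditional measure $\mu(\cdot\,|\,J_{i_0})$ for a vertex $i_0$ in a component $H\in\mathcal{M}$ and to those cylinders remaining inside $H$, where irreducibility gives the needed uniform lower bound (Lemmata \ref{g14} and \ref{g7}). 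Your hint (``the Perron root is achieved on some component'') is pointing in the right direction, but it is a genuine step, not a free observation.

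On the dichotomy itself, your mechanism is correct but your proposed tool differs from the paper's in implementation: you invoke the matrix-analytic fact that a nonnegative block-triangular matrix with two comparable irreducible blocks of common Perron root $1$ has $\|M^k\|\asymp k\cdot 1^k$, while the paper instead constructs explicit auxiliary Mauldin--Williams-type measures $\nu_q$ on the cylinders that pass through $H_1$, then through a connecting path $\widetilde\gamma$, then through $H_2$ (Proposition \ref{mthm4}); summing $\nu_q(F_q)$ over $q=1,\ldots,l_{1k}-1-h$ gives linear growth $Q_k\to\infty$, which is exactly the Jordan-block/polynomial blow-up you describe, but proved by hand using left and right Perron eigenvectors of $A_{H_1,s_r}$ and $A_{H_2,s_r}$ rather than by citing a norm asymptotic. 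For the converse (incomparable case), the paper decomposes each $\omega\in\Lambda_{j,r}$ as $\nu'*\tau*\nu''$ with $\nu',\nu''\in F^*$ and $\tau\in H^*$ for a single $H\in\mathcal{M}$, and uses geometric decay of $\sum_{\nu\in F_n}(p_\nu c_\nu^r)^{s_r/(s_r+r)}$ (Lemma \ref{s26}, with $F=G\setminus\bigcup_{H\in\mathcal{M}}H$ having $\Psi_F(s_r)<1$) to bound the full sum; this matches your claim that ``every directed path meets $\mathcal{M}$ in at most one component,'' but again needs the explicit decomposition to turn into an estimate. So: right ideas, right dichotomy, right polynomial-growth mechanism; the paper realizes the dichotomy through explicit measure constructions and word decompositions rather than through spectral norm estimates, and the conditional-measure restriction for the lower bound is a step you would need to add.
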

At this point, we remark that, although the quantization problem for
probability measures and the Hausdorff measure of sets are two substantially
different objects, we benefit significantly from some methods previously
developed in \citep{MW:88}.

\section{Notations and preliminary facts}

For every $k\geq2$ and $\sigma=(\sigma_{1},\ldots,\sigma_{k})\in\Omega_{k}$,
we write 
\begin{eqnarray*}
\sigma^{-}:=(\sigma_{1},\ldots,\sigma_{k-1});\;\; p_{\sigma}:=p_{\sigma_{1}\sigma_{2}}\cdots p_{\sigma_{k-1}\sigma_{k}},\; c_{\sigma}:=c_{\sigma_{1}\sigma_{2}}\cdots c_{\sigma_{k-1}\sigma_{k}}.
\end{eqnarray*}
If $|\sigma|=1$, we set $\sigma^{-}=\theta$, where $\theta$ denotes
the empty word; we also define $p_{\sigma}:=1,c_{\sigma}:=1$ for
$\sigma\in\Omega_{1}\cup\left\{ \theta\right\} $. If $\sigma,\omega\in\Omega^{*}$
satisfy $|\sigma|\leq|\omega|$ and $\sigma=\omega|_{|\sigma|}$,
then we call $\omega$ a descendant of $\sigma$ and write $\sigma\prec\omega$.
Two words $\sigma,\omega\in\Omega^{*}$ are said to be incomparable
if neither $\sigma\prec\omega$, nor $\omega\prec\sigma$. A finite
subset $\Gamma$ of $\Omega^{*}$ is called a finite antichain if
any two words in $\Gamma$ are incomparable; a finite antichain $\Gamma$
is said to be maximal, if every $\tau\in\Omega_{\infty}$ is the descendant
of some word $\sigma\in\Gamma$. Set 
\begin{eqnarray*}
\underline{p}:=\min_{(i,j)\in\Omega_{2}}p_{ij},\;\underline{c}:=\min_{(i,j)\in\Omega_{2}}c_{ij},\;\overline{p}:=\max_{(i,j)\in\Omega_{2}}p_{ij},\;\overline{c}:=\max_{(i,j)\in\Omega_{2}}c_{ij}.
\end{eqnarray*}
For $r>0$, let $\eta:=\underline{p}\underline{c}^{r}$. To study
the quantization error $e_{n,r}(\mu)$, we define 
\begin{eqnarray}
\Lambda_{j,r}:=\{\sigma\in\Omega^{*}:p_{\sigma^{-}}c_{\sigma^{-}}^{r}\geq\eta^{j}>p_{\sigma}c_{\sigma}^{r}\}.\label{lambdajr}
\end{eqnarray}
Then $(\Lambda_{j,r})_{j=1}^{\infty}$ is a sequence of finite maximal
antichains. This type of sets were constructed by Graf and Luschgy
in their work on the quantization for self-similar measures (cf. \citep{GL:00}).
The spirit of these constructions is to seek some kind of uniformity
while general measures are not uniform. We define 
\begin{eqnarray*}
 &  & \phi_{j,r}:={\rm card}(\Lambda_{j,r}),\; l_{1j}:=\min_{\sigma\in\Lambda_{j,r}}|\sigma|,\; l_{2j}:=\max_{\sigma\in\Lambda_{j,r}}|\sigma|;\\
 &  & \underline{P}_{r}^{s}(\mu):=\liminf_{j\to\infty}\phi_{j,r}^{\frac{r}{s}}e_{\phi_{j,r},r}^{r}(\mu),\;\;\overline{P}_{r}^{s}(\mu):=\limsup_{j\to\infty}\phi_{j,r}^{\frac{r}{s}}e_{\phi_{j,r},r}^{r}(\mu).
\end{eqnarray*}

\begin{lem}
\label{g9} We have 
\[
\underline{Q}_{r}^{s}(\mu)>0\iff\underline{P}_{r}^{s}(\mu)>0~~~~\mbox{and }~~~~\overline{Q}_{r}^{s}(\mu)<\infty\iff\overline{P}_{r}^{s}(\mu)<\infty.
\]
 \end{lem}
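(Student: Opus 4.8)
The plan is to compare the quantization error along the full sequence $n\to\infty$ with its values along the sparser sequence $n=\phi_{j,r}$, using the regularity of the antichains $\Lambda_{j,r}$. The key structural fact is that consecutive cardinalities $\phi_{j,r}$ and $\phi_{j+1,r}$ are \emph{comparable up to a multiplicative constant}: passing from level $j$ to level $j+1$ replaces each $\sigma\in\Lambda_{j,r}$ by at most a bounded number of children (at most $N$, in fact), and by assumption (\ref{cardpij>0}) by at least $2$, so there is a constant $c_0>1$, depending only on $N$, $\underline p$ and $\underline c$, with $c_0^{-1}\phi_{j+1,r}\le\phi_{j,r}\le\phi_{j+1,r}$. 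Hence for every $n$ there is a unique $j=j(n)$ with $\phi_{j,r}\le n<\phi_{j+1,r}$, and $n$ and $\phi_{j,r}$ differ by at most the factor $c_0$.

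First I would record the elementary monotonicity $e_{n,r}(\mu)\ge e_{m,r}(\mu)$ for $n\le m$, which is immediate from the definition (\ref{quanerrordef}) since $\mathcal D_n\subset\mathcal D_m$. Combining this with the sandwiching $\phi_{j,r}\le n<\phi_{j+1,r}$ gives
\[
e_{\phi_{j+1,r},r}(\mu)\le e_{n,r}(\mu)\le e_{\phi_{j,r},r}(\mu),
\]
and multiplying by the appropriate powers of $n$ and using $\phi_{j,r}\le n\le c_0\,\phi_{j,r}$ together with $\phi_{j+1,r}\le c_0\,\phi_{j,r}\le c_0\,n$ yields two-sided bounds
\[
c_0^{-r/s}\,\phi_{j+1,r}^{r/s}e_{\phi_{j+1,r},r}^r(\mu)\ \le\ n^{r/s}e_{n,r}^r(\mu)\ \le\ c_0^{r/s}\,\phi_{j,r}^{r/s}e_{\phi_{j,r},r}^r(\mu).
\]
Taking $\liminf$ and $\limsup$ as $n\to\infty$ (noting $j(n)\to\infty$) shows that $\underline Q_r^s(\mu)$ and $\underline P_r^s(\mu)$ are each bounded above and below by a constant multiple of the other, and likewise for $\overline Q_r^s(\mu)$ and $\overline P_r^s(\mu)$. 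In particular one is positive (resp.\ finite) if and only if the other is, which is exactly the claimed equivalences.

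The only genuine point requiring care, and hence the main obstacle, is establishing the comparability constant $c_0$ between $\phi_{j,r}$ and $\phi_{j+1,r}$ cleanly from the definition (\ref{lambdajr}) of $\Lambda_{j,r}$: one must check that every $\omega\in\Lambda_{j+1,r}$ has an ancestor in $\Lambda_{j,r}$ (so that $\Lambda_{j+1,r}$ refines $\Lambda_{j,r}$ as maximal antichains, giving $\phi_{j,r}\le\phi_{j+1,r}$) and, conversely, that each $\sigma\in\Lambda_{j,r}$ has at most a bounded number of descendants in $\Lambda_{j+1,r}$ — this bound is $N$ because any strict descendant $\sigma\ast\omega$ with $|\omega|\ge 2$ already satisfies $p_{\sigma\ast\omega}c_{\sigma\ast\omega}^r\le\overline p\,\overline c^r\cdot p_\sigma c_\sigma^r\le\overline p\,\overline c^r\,\eta^j<\eta^{j+1}$ once $\overline p\,\overline c^r<\eta$, which may fail, so one actually needs the slightly more robust estimate that the ratio $p_{\sigma^-}c_{\sigma^-}^r/(p_\sigma c_\sigma^r)$ is bounded below by $\eta$, forcing any antichain element at level $j+1$ below $\sigma\in\Lambda_{j,r}$ to lie within a uniformly bounded number of generations of $\sigma$, whence a uniformly bounded count. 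Once this combinatorial comparability is in hand, the rest is the routine sandwiching above.
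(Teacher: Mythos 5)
Your proof coincides with the paper's: both establish $\phi_{j,r}\le\phi_{j+1,r}\le C\,\phi_{j,r}$ by showing that $\Lambda_{j+1,r}$ refines $\Lambda_{j,r}$ to depth at most $N_{1}:=\min\{h\in\mathbb{N}:(\overline p\,\overline c^{\,r})^{h}<\eta\}$, and then sandwich $n^{r/s}e^{r}_{n,r}(\mu)$ between consecutive $\phi_{j,r}$-terms, a step the paper delegates to the citation \citep[Lemma 2.4]{Zhu:12} while you carry it out explicitly. One small correction in your concluding sketch: the quantity controlling the refinement depth is the one-step factor $p_{\sigma}c_{\sigma}^{r}/p_{\sigma^{-}}c_{\sigma^{-}}^{r}\in[\eta,\overline p\,\overline c^{\,r}]$ with $\overline p\,\overline c^{\,r}<1$; your claim that $p_{\sigma^{-}}c_{\sigma^{-}}^{r}/(p_{\sigma}c_{\sigma}^{r})$ is ``bounded below by $\eta$'' is vacuous (that ratio is always $\ge 1$), and it is the upper bound $\overline p\,\overline c^{\,r}<1$ that forces the refinement to terminate within $N_{1}$ generations.
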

\begin{proof}
Let $N_{1}:=\min\{h\in\mathbb{N}:(\overline{p}\overline{c}^{r})^{h}<\eta\}$.
For every $\sigma\in\Lambda_{j,r}$, we have, $p_{\sigma}c_{\sigma}^{r}<\eta^{j}$.
Hence, for every $\omega\in\Omega_{N_{1}}$ with $(\sigma_{|\sigma|},\omega_{1})\in\Omega_{2}$,
we have 
\begin{eqnarray*}
p_{\sigma\ast\omega}c_{\sigma\ast\omega}^{r}=(p_{\sigma}c_{\sigma}^{r})(p_{\sigma_{|\sigma|}\omega_{1}})p_{\omega}c_{\omega}^{r}\leq(p_{\sigma}c_{\sigma}^{r})(\overline{p}\overline{c}^{r})^{N_{1}}<\eta^{j+1}.
\end{eqnarray*}
Hence, $\Lambda_{j+1,r}\subset\bigcup_{h=1}^{N_{1}}\bigcup_{\sigma\in\Lambda_{j,r}}\Gamma(\sigma,h)$,
where 
\begin{eqnarray}
\Gamma(\sigma,h):=\left\{ \omega\in\Omega_{|\sigma|+h}:\sigma\prec\omega\right\} .\label{g30}
\end{eqnarray}
It follows that $\phi_{j,r}\leq\phi_{j+1,r}\leq N^{N_{1}}\phi_{j,r}$.
This and \citep[Lemma 2.4]{Zhu:12} completes the proof of the lemma. 
\end{proof}
If the infimum in (\ref{quanerrordef}) is attained at some $\alpha$
with ${\rm card}(\alpha)\leq n$, we call $\alpha$ an $n$-optimal
set for $\nu$ of order $r$. The collection of all $n$-optimal sets
for $\nu$ of order $r$ is denoted by $C_{n,r}(\nu)$. For two sequences
$(a_{n})_{n=1}^{\infty}$ and $(b_{n})_{n=1}^{\infty}$ of positive
real numbers, we write $a_{n}\ll b_{n}$ if there is some constant
$B$ independent of $n$ such that $a_{n}\leq B\cdot b_{n}$. If $a_{n}\ll b_{n}$
and $b_{n}\ll a_{n}$ we write $a_{n}\asymp b_{n}$. 

For every $k\geq1$ and a vector $w=(w_{i})_{i=1}^{k}\in\mathbb{R}^{k}$,
we define 
\[
\overline{w}:=\max_{1\leq i\leq k}w_{i},\;\underline{w}:=\min_{1\leq i\leq k}w_{i}.
\]

\begin{lem}
For all large $j\geq1$, we have 
\begin{eqnarray}
e_{\phi_{j,r},r}^{r}(\mu)\asymp\sum_{\sigma\in\Lambda_{j,r}}p_{\sigma}c_{\sigma}^{r}.\label{characterization}
\end{eqnarray}
\end{lem}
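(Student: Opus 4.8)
The plan is to sandwich $e_{\phi_{j,r},r}^{r}(\mu)$ between two constant multiples of $S_j := \sum_{\sigma\in\Lambda_{j,r}}p_{\sigma}c_{\sigma}^{r}$, using the antichain $\Lambda_{j,r}$ as a natural scale on which the measure $\mu$ behaves uniformly. For the upper bound, I would take one point $a_\sigma$ in each cylinder $J_\sigma$, $\sigma\in\Lambda_{j,r}$ — say a point of the support of $\mu|_{J_\sigma}$ — forming a set $\alpha_j$ with $\operatorname{card}(\alpha_j)\le\phi_{j,r}$. Then $e_{\phi_{j,r},r}^{r}(\mu)\le\int d(x,\alpha_j)^r\,d\mu\le\sum_{\sigma\in\Lambda_{j,r}}\mu(J_\sigma)\,|J_\sigma|^r$. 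Since $\mu(J_\sigma)=\chi_{\sigma_1}p_\sigma\asymp p_\sigma$ (because $\chi_i$ is bounded above and below away from $0$) and $|J_\sigma|=c_\sigma$, the right-hand side is $\asymp S_j$, giving $e_{\phi_{j,r},r}^{r}(\mu)\ll S_j$.

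For the lower bound I would argue that no set of $\phi_{j,r}$ points can do essentially better. Fix any $\alpha$ with $\operatorname{card}(\alpha)\le\phi_{j,r}$. The key geometric input is the separation property \eqref{g4}: the cylinders $\{J_\sigma:\sigma\in\Lambda_{j,r}\}$ are quantitatively separated at their own scale, so if I pass to the children (or a bounded number of generations of descendants) of each $\sigma\in\Lambda_{j,r}$, I obtain a family of at least $2\phi_{j,r}$ cylinders that are pairwise separated by a distance comparable to their diameters, and whose diameters are comparable to $c_\sigma$ for the respective ancestor $\sigma$. By the pigeon-hole principle, at least $\phi_{j,r}$ of these child-cylinders contain no point of $\alpha$ in a suitably shrunk concentric copy; for each such cylinder $J_{\sigma\ast k}$, every $x$ in a fixed proportion of its $\mu$-mass has $d(x,\alpha)\gtrsim c_{\sigma\ast k}\asymp c_\sigma$, and that mass is $\asymp p_{\sigma\ast k}\asymp p_\sigma$ (using $\underline p\le p_{\sigma_{|\sigma|}k}\le\overline p$). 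Summing over these cylinders yields $\int d(x,\alpha)^r\,d\mu\gg S_j$, hence $e_{\phi_{j,r},r}^{r}(\mu)\gg S_j$. A cleaner alternative, which I would actually prefer, is to invoke Graf–Luschgy's Pierce-type lemma \citep[Lemma 3.4]{GL:00}: for any finite measure $\rho$ and partition into $m$ pieces of uniformly comparable ``size'', $e_{n,r}^r(\rho)$ is bounded below by a constant times the tail sum once $n<m$; applying this to the partition of $\mu$ by $\{J_\sigma:\sigma\in\Lambda_{j,r}\}$ together with the separation \eqref{g4} and the uniform two-sided estimate $\eta^{j}\asymp p_\sigma c_\sigma^r$ valid on $\Lambda_{j,r}$ (so that all terms $p_\sigma c_\sigma^r$ are within a factor $\eta^{-1}$ of each other) gives the bound directly.

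The main obstacle is the lower bound, and within it the honest accounting for the separation constant: the cylinders in $\Lambda_{j,r}$ themselves need not be separated from each other — only siblings are, via \eqref{g4} — so one must descend a controlled number of generations, keep track of how diameters and probabilities change under this descent (each step costs a factor between $\underline c$ and $\overline c$, resp.\ $\underline p$ and $\overline p$), and ensure the descended family still has cardinality $\gg\phi_{j,r}$ while remaining pairwise separated at scale comparable to the original $c_\sigma$. Condition \eqref{cardpij>0} guarantees each $\sigma$ has at least two children, which is exactly what makes the cardinality survive the descent; the uniformity built into the definition \eqref{lambdajr} of $\Lambda_{j,r}$ is what keeps all the constants in $\asymp$ independent of $j$. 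Once these bookkeeping constants are pinned down, both inequalities hold for all large $j$ and \eqref{characterization} follows.
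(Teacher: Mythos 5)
Your plan is the same as the paper's. The upper bound is verbatim: one arbitrary $a_{\sigma}\in J_{\sigma}$ per $\sigma\in\Lambda_{j,r}$ gives $e_{\phi_{j,r},r}^{r}(\mu)\leq\sum_{\sigma}\mu(J_{\sigma})|J_{\sigma}|^{r}=\sum_{\sigma}\chi_{\sigma_{1}}p_{\sigma}c_{\sigma}^{r}\leq\overline{\chi}\sum_{\sigma}p_{\sigma}c_{\sigma}^{r}$, and the lower bound is exactly what the paper outsources to \citep[Lemmas~3 and~4]{Zhu:08}: first localize any codebook of size $\phi_{j,r}$ to a set $\beta(\sigma)$ of at most $L$ points per cylinder using \eqref{g4}, then show that such a bounded set cannot push $\int_{J_{\sigma}}d(x,\beta(\sigma))^{r}\,d\mu$ below $D\mu(J_{\sigma})|J_{\sigma}|^{r}$ using \eqref{cardpij>0}.

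One substantive correction to what you label the ``main obstacle.'' It is not true that only siblings are separated. If $\sigma,\tau\in\Lambda_{j,r}$ are distinct, they are incomparable, so they share a longest common prefix $\omega$ and diverge through distinct children $\omega\ast a\neq\omega\ast b$; since $J_{\sigma}\subset J_{\omega\ast a}$ and $J_{\tau}\subset J_{\omega\ast b}$, \eqref{g4} gives
\[
d(J_{\sigma},J_{\tau})\geq d(J_{\omega\ast a},J_{\omega\ast b})\geq t\max\{|J_{\omega\ast a}|,|J_{\omega\ast b}|\}\geq t\max\{|J_{\sigma}|,|J_{\tau}|\}.
\]
So every antichain is automatically pairwise separated at a scale $\geq t$ times the \emph{larger} of the two diameters, with no descent required; in particular a single codepoint can be within $\tfrac{t}{3}|J_{\sigma}|$ of at most one $J_{\sigma}$ with $\sigma\in\Lambda_{j,r}$, which is precisely the localization step. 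The descent by a bounded number of generations is used for a different purpose, namely the local estimate: by \eqref{cardpij>0} each $\sigma$ has at least $2^{m}$ descendants at depth $m$, pairwise separated at scale $\gtrsim\underline{c}^{\,m}|J_{\sigma}|$, so $L$ points must miss enough of them to leave error $\gg\mu(J_{\sigma})|J_{\sigma}|^{r}$. Also note, regarding your preferred Pierce-type alternative, that it is $p_{\sigma}c_{\sigma}^{r}\asymp\eta^{j}$ that is uniform over $\Lambda_{j,r}$ — the diameters $c_{\sigma}$ themselves are not mutually comparable, since $p_{\sigma}$ and $c_{\sigma}$ can trade off against each other — so the separation statement above, not ``comparable size,'' is the hypothesis that actually does the work.
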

\begin{proof}
For every $\sigma\in\Lambda_{j,r}$, let $a_{\sigma}$ be an arbitrary
point in $J_{\sigma}$. We have 
\begin{eqnarray*}
e_{\phi_{j,r},r}^{r}(\mu) & \leq & \sum_{\sigma\in\Lambda_{j,r}}\int_{J_{\sigma}}d(x,a_{\sigma})^{r}d\mu(x)\\
 & \leq & \sum_{\sigma\in\Lambda_{j,r}}\mu(J_{\sigma})|J_{\sigma}|^{r}=\sum_{\sigma\in\Lambda_{j,r}}\chi_{\sigma_{1}}p_{\sigma}c_{\sigma}^{r}\leq\overline{\chi}\sum_{\sigma\in\Lambda_{j,r}}p_{\sigma}c_{\sigma}^{r}.
\end{eqnarray*}
Using (\ref{g4}) and the method in \citep[Lemma 3]{Zhu:08}, one
can find a constant $L\geq1$, which is independent of $j$, and a
set $\beta(\sigma)$ with ${\rm card}(\beta(\sigma))\leq L$ such
that 
\begin{eqnarray}
e_{\phi_{j,r},r}^{r}(\mu)\geq\sum_{\sigma\in\Lambda_{j,r}}\int_{J_{\sigma}}d(x,\beta(\sigma))^{r}d\mu(x).\label{s30}
\end{eqnarray}
Then by (\ref{cardpij>0}) and the arguments in \citep[Lemma 4]{Zhu:08},
one may find a constant $D>0$ which is independent of $\sigma\in\Omega^{*}$,
such that 
\begin{eqnarray}
\int_{J_{\sigma}}d(x,\beta(\sigma))^{r}d\mu(x)\geq D\mu(J_{\sigma})|J_{\sigma}|^{r}\geq D\underline{\chi}p_{\sigma}c_{\sigma}^{r}.\label{s31}
\end{eqnarray}
By (\ref{s30}) and (\ref{s31}), we conclude that $e_{\phi_{j,r},r}^{r}(\mu)\geq D\underline{\chi}\sum_{\sigma\in\Lambda_{j,r}}p_{\sigma}c_{\sigma}^{r}$.
\end{proof}

\section{Proof of Theorem \ref{mthm1}}

We will treat the irreducible and non-irreducible case separately.

\subsection{Markov measures with irreducible transition matrix}
\begin{lem}
\label{lem2} Assume that $P=(p_{ij})_{N\times N}$ is irreducible.
Then there exist constants $\delta_{i}>0,i=1,2$ such that, for every
finite maximal antichain $\Gamma\subset\Omega^{*}$, 
\[
\delta_{1}\leq\sum_{\sigma\in\Gamma}\left(p_{\sigma}c_{\sigma}^{r}\right)^{s_{r}/\left(s_{r}+r\right)}\leq\delta_{2}.
\]
\end{lem}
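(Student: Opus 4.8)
The plan is to exploit the Perron--Frobenius eigenvector of the matrix $A_{G,s_r}$ in order to replace the sum $\sum_{\sigma\in\Gamma}(p_\sigma c_\sigma^r)^{s_r/(s_r+r)}$, up to two-sided multiplicative constants, by a quantity that is literally \emph{constant} over all finite maximal antichains $\Gamma$. Write $\kappa:=s_r/(s_r+r)$, so that $b_{ij}(s_r)=(p_{ij}c_{ij}^r)^{\kappa}$, and for $\sigma=(\sigma_1,\dots,\sigma_k)\in\Omega^*$ set $b_\sigma:=\prod_{h=1}^{k-1}b_{\sigma_h\sigma_{h+1}}(s_r)$ (and $b_\sigma:=1$ if $|\sigma|\le1$); then $b_\sigma=(p_\sigma c_\sigma^r)^{\kappa}$ and $b_{\sigma\ast j}=b_\sigma\,b_{\sigma_{|\sigma|}j}(s_r)$. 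Since $b_{ij}(s_r)>0$ exactly when $p_{ij}>0$, the irreducibility of $P$ makes $A_{G,s_r}$ irreducible; as its spectral radius is $\Psi_G(s_r)=1$, the Perron--Frobenius theorem provides a strictly positive right eigenvector $w=(w_i)_{i=1}^N$ with $\sum_{j=1}^N b_{ij}(s_r)w_j=w_i$ for every $i$. For a finite antichain $\Gamma$ with $\theta\notin\Gamma$ I then set $S(\Gamma):=\sum_{\sigma\in\Gamma}b_\sigma w_{\sigma_{|\sigma|}}$. The key observation will be that $S$ is invariant under a \emph{refinement step}, i.e.\ under replacing a word $\sigma\in\Gamma$ by the set of all its children $\{\sigma\ast j:(\sigma_{|\sigma|},j)\in\Omega_2\}$: recalling that $b_{ij}(s_r)=0$ when $p_{ij}=0$, the eigenvector equation gives $\sum_j b_{\sigma\ast j}w_j=b_\sigma\sum_j b_{\sigma_{|\sigma|}j}(s_r)w_j=b_\sigma w_{\sigma_{|\sigma|}}$, and one checks directly that a refinement step maps finite maximal antichains to finite maximal antichains.

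Given these ingredients, let $\Gamma$ be an arbitrary finite maximal antichain. If $\Gamma=\{\theta\}$ then $\sum_{\sigma\in\Gamma}(p_\sigma c_\sigma^r)^{\kappa}=1$. Otherwise $\theta\notin\Gamma$ (every word is comparable with $\theta$), so each $\sigma\in\Gamma$ has length at least $1$; put $M:=\max_{\sigma\in\Gamma}|\sigma|$. Then the family of all length-$M$ descendants of the words of $\Gamma$ equals $\Omega_M$: any $\omega\in\Omega_M$ lies --- by (\ref{cardpij>0}) --- on an infinite path $\tau\in\Omega_\infty$, which by maximality of $\Gamma$ has a prefix $\rho\in\Gamma$, necessarily with $|\rho|\le M$, so that $\rho$ is a prefix of $\omega$; the converse inclusion is immediate. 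Consequently $\Omega_M$ is reached from $\Gamma$ by finitely many refinement steps, and equally from $\Omega_1$ by refining each one-letter word down to length $M$; since $S(\Omega_1)=\sum_{i=1}^N w_i=:C$, invariance of $S$ yields $S(\Gamma)=C$. As $0<\underline{w}\le w_i\le\overline{w}<\infty$, this gives $\underline{w}\sum_{\sigma\in\Gamma}b_\sigma\le C\le\overline{w}\sum_{\sigma\in\Gamma}b_\sigma$, i.e.\ $C/\overline{w}\le\sum_{\sigma\in\Gamma}(p_\sigma c_\sigma^r)^{\kappa}\le C/\underline{w}$, where $C/\overline{w}\ge1$ because $N\ge2$. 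Combining the two cases, $1\le\sum_{\sigma\in\Gamma}(p_\sigma c_\sigma^r)^{\kappa}\le C/\underline{w}$ for every finite maximal antichain $\Gamma$, which is the assertion with $\delta_1:=1$ and $\delta_2:=C/\underline{w}$.

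The one step I expect to require genuine care --- everything else being a direct manipulation of the positive Perron eigenvector of $A_{G,s_r}$ --- is the combinatorial fact that the length-$M$ descendants of a finite maximal antichain exhaust $\Omega_M$. This is precisely where both the maximality of $\Gamma$ and hypothesis (\ref{cardpij>0}) --- which guarantees that an infinite path runs through every finite word and must therefore be cut by $\Gamma$ at some level $\le M$ --- are genuinely used, and it is also what lets one pass from the stepwise invariance of $S$ to a single value $C$ shared by all finite maximal antichains.
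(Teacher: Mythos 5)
Your proof is correct and follows the same core idea as the paper's: weight the summands by the positive Perron--Frobenius right eigenvector of $A_{G,s_r}$ (so that the weighted sum becomes constant over finite maximal antichains) and then strip the weights off by the positivity of the eigenvector. The paper packages the constancy argument via Kolmogorov's consistency theorem, producing a probability measure $\nu_1$ on $\Omega_\infty$ with $\nu_1([\sigma])=(p_\sigma c_\sigma^r)^{s_r/(s_r+r)}\xi_{\sigma_{|\sigma|}}$, whereas you establish the same identity by a direct refinement-invariance induction pushing every antichain down to a common level $\Omega_M$; this is a more elementary reformulation rather than a genuinely different route, and it also handles the corner case $\Gamma=\{\theta\}$ explicitly (which the paper's eigenvector formula, written with $\xi_{\sigma_{|\sigma|}}$, implicitly excludes).
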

\begin{proof}
As $\Psi_{G}(s_{r})=1$ and $A_{G,s_{r}}$ is a non-negative irreducible
matrix, by Perron-Frobenius theorem, $1$ is an eigenvalue of $A_{G,s_{r}}$
and there is a positive vector $\xi=(\xi_{i})_{i=1}^{N}$ with $\sum_{i=1}^{N}\xi_{i}=1$
such that $A_{G,s_{r}}\xi=\xi$. As before, for $k\geq2$ and $\sigma\in\Omega_{k}$,
we set $[\sigma]:=\{\omega\in\Omega_{\infty}:\omega|_{|\sigma|}=\sigma\}$.
We define 
\[
\nu_{1}([\sigma]):=\left(p_{\sigma}c_{\sigma}^{r}\right)^{s_{r}/\left(s_{r}+r\right)}\xi_{\sigma_{|\sigma|}}.
\]
Then one can easily see 
\begin{eqnarray*}
\sum_{i=1}^{N}\nu_{1}([\sigma\ast i]) & = & (p_{\sigma}c_{\sigma}^{r})^{s_{r}/\left(s_{r}+r\right)}\sum_{i=1}^{N}(p_{\sigma_{|\sigma|}i}c_{\sigma_{|\sigma|}i}^{r})^{s_{r}/\left(s_{r}+r\right)}\xi_{i}=\nu_{1}([\sigma]),\\
\sum_{\sigma\in\Omega_{2}}(p_{\sigma}c_{\sigma}^{r})^{s_{r}/\left(s_{r}+r\right)}\xi_{\sigma_{|\sigma|}} & = & \sum_{i=1}^{N}\sum_{j=1}^{N}(p_{ij}c_{ij}^{r})^{s_{r}/\left(s_{r}+r\right)}\xi_{j}=\sum_{i=1}^{N}\xi_{i}=1.
\end{eqnarray*}
Thus, by Kolmogorov consistency theorem, $\nu_{1}$ extends a probability
measure on $\Omega_{\infty}$. Since $\Gamma$ is a finite maximal
antichain, we have 
\[
\sum_{\sigma\in\Gamma}\left(p_{\sigma}c_{\sigma}^{r}\right)^{s_{r}/\left(s_{r}+r\right)}\xi_{\sigma_{|\sigma|}}=\nu_{1}(\Omega_{\infty})=1.
\]
As an immediate consequence, we have 
\[
\overline{\xi}^{-1}\leq\sum_{\sigma\in\Gamma}\left(p_{\sigma}c_{\sigma}^{r}\right)^{s_{r}/\left(s_{r}+r\right)}\leq\underline{\xi}^{-1}.
\]
The lemma follows by setting $\delta_{1}:=\overline{\xi}^{-1}$ and
$\delta_{2}:=\underline{\xi}^{-1}$.\end{proof}
\begin{prop}
\label{mthm2'} Assume that $P=(p_{ij})_{N\times N}$ is irreducible.
Then we have 
\begin{equation}
0<\underline{Q}_{r}^{s_{r}}(\mu)\leq\overline{Q}_{r}^{s_{r}}(\mu)<\infty.\label{s18}
\end{equation}
\end{prop}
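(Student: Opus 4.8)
The plan is to combine the characterization \eqref{characterization} of the quantization error along the antichains $\Lambda_{j,r}$ with the two-sided bound on antichain sums furnished by Lemma \ref{lem2}, and then transfer the resulting estimate on $e_{\phi_{j,r},r}^{r}(\mu)$ to the full sequence $e_{n,r}(\mu)$ via Lemma \ref{g9}. First I would control the cardinalities $\phi_{j,r}={\rm card}(\Lambda_{j,r})$. Writing out the definition \eqref{lambdajr} of $\Lambda_{j,r}$, every $\sigma\in\Lambda_{j,r}$ satisfies $p_{\sigma}c_{\sigma}^{r}\geq\eta^{j}\underline{p}\,\underline{c}^{r}=\eta^{j+1}$ and $p_{\sigma^{-}}c_{\sigma^{-}}^{r}\geq\eta^{j}$; raising to the power $s_{r}/(s_{r}+r)$ and using that the exponents of $p_{\sigma}c_{\sigma}^{r}$ appearing in Lemma \ref{lem2} lie in a bounded range, one gets $\eta^{(j+1)s_{r}/(s_{r}+r)}\phi_{j,r}\ll\sum_{\sigma\in\Lambda_{j,r}}(p_{\sigma}c_{\sigma}^{r})^{s_{r}/(s_{r}+r)}\ll\delta_{2}$, and similarly a lower bound $\gg\delta_{1}\eta^{j s_{r}/(s_{r}+r)}$ after comparing each term to $\eta^{j s_{r}/(s_{r}+r)}$ times something bounded below. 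Hence $\phi_{j,r}\asymp\eta^{-j s_{r}/(s_{r}+r)}$, i.e. $\eta^{j}\asymp\phi_{j,r}^{-(s_{r}+r)/s_{r}}$.

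Next I would estimate the sum $S_j:=\sum_{\sigma\in\Lambda_{j,r}}p_{\sigma}c_{\sigma}^{r}$ directly. Each $\sigma\in\Lambda_{j,r}$ has $p_{\sigma}c_{\sigma}^{r}\asymp\eta^{j}$ (it is $<\eta^j$ by definition, and $\geq\eta^{j+1}$ by the one-step argument in the proof of Lemma \ref{g9}), so $S_j\asymp\phi_{j,r}\,\eta^{j}$. Combining with the cardinality estimate $\eta^{j}\asymp\phi_{j,r}^{-(s_{r}+r)/s_{r}}$ gives $S_j\asymp\phi_{j,r}^{1-(s_{r}+r)/s_{r}}=\phi_{j,r}^{-r/s_{r}}$. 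By \eqref{characterization}, $e_{\phi_{j,r},r}^{r}(\mu)\asymp S_j\asymp\phi_{j,r}^{-r/s_{r}}$, that is, $\phi_{j,r}^{r/s_{r}}e_{\phi_{j,r},r}^{r}(\mu)\asymp 1$. Therefore $0<\underline{P}_{r}^{s_{r}}(\mu)\leq\overline{P}_{r}^{s_{r}}(\mu)<\infty$, and Lemma \ref{g9} then upgrades this to $0<\underline{Q}_{r}^{s_{r}}(\mu)\leq\overline{Q}_{r}^{s_{r}}(\mu)<\infty$, which is \eqref{s18}.

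The step I expect to be the main obstacle is making the passage $\phi_{j,r}\asymp\eta^{-j s_{r}/(s_{r}+r)}$ fully rigorous, because the exponent $s_{r}/(s_{r}+r)$ applied to $p_\sigma c_\sigma^r$ does not commute with summation, and the words in $\Lambda_{j,r}$ have varying lengths between $l_{1j}$ and $l_{2j}$. One must check that $p_\sigma c_\sigma^r$ stays within a bounded multiplicative window of $\eta^j$ uniformly over $\sigma\in\Lambda_{j,r}$ and over $j$ — which follows from the defining inequalities of $\Lambda_{j,r}$ together with $\eta\leq p_{ij}c_{ij}^r\leq\overline{p}\,\overline{c}^r<1$ — and then that raising a quantity comparable to $\eta^j$ to the power $s_r/(s_r+r)$ produces something comparable to $(\eta^j)^{s_r/(s_r+r)}$ with constants independent of $j$. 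Once that uniformity is in hand, Lemma \ref{lem2} applied to the maximal antichain $\Gamma=\Lambda_{j,r}$ pins down $\phi_{j,r}$ up to constants, and the rest is bookkeeping. A minor point to handle is that Lemma \ref{lem2} requires irreducibility of $P$, which is exactly the standing hypothesis of the proposition, so it applies verbatim.
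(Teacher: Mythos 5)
Your proposal follows exactly the same path as the paper's proof: using the two-sided uniform bound $\eta^{j+1}\le p_\sigma c_\sigma^r<\eta^j$ on $\Lambda_{j,r}$ together with Lemma~\ref{lem2} to conclude $\phi_{j,r}\,\eta^{js_r/(s_r+r)}\asymp 1$, then feeding this into~(\ref{characterization}) to get $e_{\phi_{j,r},r}^r(\mu)\asymp\phi_{j,r}^{-r/s_r}$, and finally invoking Lemma~\ref{g9}. The "main obstacle" you flag about raising to the power $s_r/(s_r+r)$ is in fact handled automatically by the uniform two-sided inequality you already stated, so your argument is complete and matches the paper's.
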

\begin{proof}
By Lemma \ref{lem2}, for $j\geq1$, we see 
\begin{eqnarray*}
\phi_{j,r}\eta^{js_{r}/(s_{r}+r)}\asymp\sum_{\sigma\in\Lambda_{j,r}}\left(p_{\sigma}c_{\sigma}^{r}\right)^{s_{r}/(s_{r}+r)}\asymp1,\;\;\mbox{implying }\;\eta^{j}\asymp\phi_{j,r}^{-(s_{r}+r)/s_{r}}.
\end{eqnarray*}
This, together with (\ref{characterization}), leads to 
\begin{eqnarray*}
e_{\phi_{j,r}}^{r}(\mu)\asymp\sum_{\sigma\in\Lambda_{j,r}}p_{\sigma}c_{\sigma}^{r}\asymp\phi_{j,r}\cdot\eta^{j}\asymp\phi_{j,r}\cdot\phi_{j,r}^{-(s_{r}+r)/s_{r}}=\phi_{j,r}^{-r/s_{r}}.
\end{eqnarray*}
It follows that $\phi_{j,r}^{r/s_{r}}e_{\phi_{j,r}}^{r}(\mu)\asymp1$.
Hence, (\ref{s18}) follows by Lemma \ref{g9}.
\end{proof}

\subsection{Markov measures with reducible transition matrix}

For every $H\in\SC(G)$, we write 
\begin{eqnarray*}
H_{\infty} & := & \left\{ \sigma\in\Omega_{\infty}:\sigma_{i}\in H\;{\rm for}\; i\geq1\right\} ,\;\; H^{*}:=\bigcup_{k=1}^{\infty}H^{k};\\
H_{k}(i) & := & \left\{ \sigma\in H^{k}:\sigma_{1}=i\right\} ,\;\; H^{*}(i):=\bigcup_{k=1}^{\infty}H_{k}(i);\\
H_{\infty}(i) & := & \left\{ \sigma\in H_{\infty}:\sigma_{1}=i\right\} ;\;\; i\in H.
\end{eqnarray*}
We define finite maximal antichains in $H^{*}$ or $H^{*}(i)$ in
the same way as we did for those in $\Omega^{*}$.
\begin{lem}
[cf. \citep{MW:88}] \label{factor} We have $s_{r}=\max_{H\in\SC(G)}s_{r}(H)$. \end{lem}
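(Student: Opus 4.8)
The plan is to prove the identity $s_{r}=\max_{H\in\SC(G)}s_{r}(H)$ by the standard two-inequality argument relating the spectral radius of the full matrix $A_{G,s}$ to the spectral radii of its strongly connected pieces. The key structural fact is that, after a simultaneous permutation of the index set $\{1,\dots,N\}$ grouping each strongly connected component together, $A_{G,s}$ becomes block (upper or lower) triangular with the diagonal blocks being exactly the matrices $A_{H,s}$, $H\in\SC(G)$, and the off-diagonal blocks carrying the transitions between components. For a block triangular matrix the spectrum is the union of the spectra of the diagonal blocks, hence the spectral radius satisfies $\Psi_{G}(s)=\psi(s/(s+r))$ equals $\max_{H\in\SC(G)}\Psi_{H}(s)$ for every $s$. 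This is the analogue of the reasoning behind \citep[Theorem 2]{MW:88}, which we are invoking throughout.

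Granting that pointwise identity $\Psi_{G}(s)=\max_{H\in\SC(G)}\Psi_{H}(s)$, the proof of the lemma is a soft monotonicity argument. First I would recall that for each $H$ the function $s\mapsto\Psi_{H}(s)$ is continuous and strictly decreasing (again by \citep[Theorem 2]{MW:88} applied to the submatrix, using that each component with ${\rm card}(H)\ge 2$ inherits enough positive entries, and treating singleton loop-free components separately since they contribute spectral radius $0$ and may be discarded), so $s_{r}(H)$ is well-defined by $\Psi_{H}(s_{r}(H))=1$ exactly as stated in the excerpt. Let $H_{0}\in\SC(G)$ attain $\max_{H}s_{r}(H)$. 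For any $s<s_{r}(H_{0})$ we get $\Psi_{G}(s)\ge\Psi_{H_{0}}(s)>\Psi_{H_{0}}(s_{r}(H_{0}))=1$ by strict monotonicity, and for any $s>\max_{H}s_{r}(H)$ we get $\Psi_{H}(s)<\Psi_{H}(s_{r}(H))=1$ for every $H$, hence $\Psi_{G}(s)=\max_{H}\Psi_{H}(s)<1$. Since $\Psi_{G}$ is itself continuous and strictly decreasing with a unique root $s_{r}$, these two one-sided bounds force $s_{r}=\max_{H\in\SC(G)}s_{r}(H)$.

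I expect the only genuine obstacle to be the bookkeeping around degenerate strongly connected components: a vertex $i$ with no self-loop (i.e.\ $p_{ii}=0$) forms a singleton strongly connected component whose submatrix $A_{H,s}$ is the $1\times1$ zero matrix, so $\Psi_{H}\equiv 0$ and the defining equation $\Psi_{H}(s_{r}(H))=1$ has no solution. One must therefore either restrict the maximum to non-degenerate components or adopt the convention $s_{r}(H)=0$ (equivalently $-\infty$) for such $H$; in either case such components are irrelevant to the maximum because assumption (\ref{cardpij>0}) guarantees at least one component $H$ with ${\rm card}(H)\ge2$, for which $\Psi_{H}(0)\ge 2>1$ and hence $s_{r}(H)>0$. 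The remaining step, the block-triangular form and the resulting identity for the spectral radius, is entirely routine linear algebra once the components are ordered compatibly with the partial order $\prec$ on $\SC(G)$, so I would state it briefly and cite \citep[Theorem 2]{MW:88} rather than reprove it.
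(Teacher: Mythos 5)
Your proof is correct and follows essentially the same route as the paper: both rest on the block-triangular form of $A_{G,s}$ over the strongly connected components (the paper phrases this as the factor theorem for $\det(\lambda I - A_{G,s})$, you phrase it as the pointwise identity $\Psi_{G}(s)=\max_{H}\Psi_{H}(s)$, but these are the same linear-algebraic fact) combined with continuity and strict monotonicity of the spectral-radius functions from \citep[Theorem 2]{MW:88}. You are somewhat more careful than the paper about the degenerate singleton components (where $A_{H,s}$ is the $1\times1$ zero matrix and $s_{r}(H)$ is undefined), which the paper's remark handles by excluding the $1\times1$ null blocks from $\SC(G)$; either convention works because (\ref{cardpij>0}) guarantees a non-degenerate component.
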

\begin{proof}
By the factor theorem \citep[pp. 813]{MW:88}, we have 
\begin{equation}
0={\rm det}(I-A_{G,s_{r}})=\prod_{H\in\SC(G)}{\rm det}(I-A_{H,s_{r}}).\label{s12}
\end{equation}
Since $A_{G,s_{r}}$ is non-negative, by the definition of $s_{r}$,
$1$ is an eigenvalue of $A_{G,s_{r}}$. For every $H\in\SC(G)$,
we know that $A_{H,s_{r}(H)}$ is irreducible since $H$ is strongly
connected. Thus, there exists an eigenvalue $\lambda_{H}>0$ which
equals the spectral radius $\Psi_{H}(s_{r})$ of $A_{H,s_{r}}$, namely,
$\lambda_{H}$ is the Perron root of $A_{H,s_{r}}$. It follows that
$1=\max_{H\in{\rm SC}(G)}\lambda_{H}$ and there exists an $H\in\SC(G)$
such that $\lambda_{H}=1$. If $\lambda_{H}=1$, then $s_{r}=s_{r}(H)$;
otherwise, we have $\lambda_{H}=\Psi_{H}(s_{r})<1$. By \citep[Theorem 2]{MW:88},
$\Psi_{H}(s)$ is strictly decreasing with respect to $s$. Thus,
$s_{r}>s_{r}(H)$. Combining the above analysis, the lemma follows. \end{proof}
\begin{rem*}
The factor theorem is an easy consequence of the following facts.
For the non-negative reducible matrix $A_{G,s}$, there exists some
permutation matrix $T$ (which is necessarily orthogonal) such that
$TA_{G,s}T^{-1}$ is a block upper triangular matrix, where the blocks
on the diagonal are either irreducible matrices or $1\times1$ null
matrices; thus the set of the eigenvalues of $A_{G,s}$ are exactly
the union of those of all the blocks on the diagonal. 

Further, the non-zero blocks on the diagonal are the images of the
maximal irreducible sub-matrices of $A_{G,s}$ corresponding to the
strongly connected components under symmetric permutations which are
orthogonal and preserve eigenvalues. Therefore, if we denote by $m$
(possibly zero) the number of $1\times1$ null matrices on the diagonal
of $TA_{G,s}T^{-1}$, then 
\[
{\rm det}(\lambda I-A_{G,s})=\lambda^{m}\prod_{H\in\SC(G)}{\rm det}(\lambda I-A_{H,s}).
\]
\end{rem*}
\begin{lem}
\label{g14} There exists constant $M_{1},M_{2}>0$ such that 
\begin{equation}
M_{1}\leq\sum_{\sigma\in\Gamma}\left(p_{\sigma}c_{\sigma}^{r}\right)^{s_{r}\left(H\right)/(s_{r}\left(H\right)+r)}\leq M_{2}.\label{g13}
\end{equation}
 for every finite maximal antichain in $H^{*}$ or $H^{*}(i),i\in H$,
with $H\in{\rm SC}(G)$.\end{lem}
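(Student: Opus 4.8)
The plan is to transplant the Perron--Frobenius argument from the proof of Lemma~\ref{lem2} off the full symbolic space and onto the sub-shift $H_\infty$ attached to a fixed strongly connected component $H$ (for which $s_r(H)$ is defined). Since $H$ is strongly connected, the sub-matrix $A_{H,s_r(H)}=(b_{ij}(s_r(H)))_{i,j\in H}$ is non-negative and irreducible, and by the defining property $\Psi_H(s_r(H))=1$ its spectral radius equals $1$. Hence, by the Perron--Frobenius theorem, $1$ is an eigenvalue of $A_{H,s_r(H)}$ with a strictly positive eigenvector $\xi^{H}=(\xi^{H}_i)_{i\in H}$, which I normalize by $\sum_{i\in H}\xi^{H}_i=1$, so that $A_{H,s_r(H)}\,\xi^{H}=\xi^{H}$.

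Next I would prescribe, for $\sigma\in H^{*}$,
\[
\nu^{H}\bigl([\sigma]\cap H_\infty\bigr):=\bigl(p_\sigma c_\sigma^{r}\bigr)^{s_r(H)/(s_r(H)+r)}\,\xi^{H}_{\sigma_{|\sigma|}} .
\]
The one-step identity $\sum_{j\in H,\,(\sigma_{|\sigma|},j)\in\Omega_2}\nu^{H}([\sigma\ast j]\cap H_\infty)=\nu^{H}([\sigma]\cap H_\infty)$ is precisely the $\sigma_{|\sigma|}$-th coordinate of $A_{H,s_r(H)}\xi^{H}=\xi^{H}$ (after factoring out $(p_\sigma c_\sigma^r)^{s_r(H)/(s_r(H)+r)}$), and $\sum_{i\in H}\nu^{H}([i]\cap H_\infty)=\sum_{i\in H}\xi^{H}_i=1$; so Kolmogorov's consistency theorem turns $\nu^{H}$ into a genuine probability measure on $H_\infty$. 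One also records the bookkeeping fact that $\nu^{H}(H_\infty(i))=\nu^{H}([i]\cap H_\infty)=\xi^{H}_i$ for each $i\in H$, using $p_{(i)}=c_{(i)}=1$.

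The key step is then to exploit maximality. If $\Gamma\subset H^{*}$ is a finite maximal antichain, incomparability makes the cylinders $[\sigma]\cap H_\infty$ ($\sigma\in\Gamma$) pairwise disjoint and maximality makes them cover $H_\infty$, so $\sum_{\sigma\in\Gamma}(p_\sigma c_\sigma^{r})^{s_r(H)/(s_r(H)+r)}\xi^{H}_{\sigma_{|\sigma|}}=\nu^{H}(H_\infty)=1$, whence $(\overline{\xi^{H}})^{-1}\le\sum_{\sigma\in\Gamma}(p_\sigma c_\sigma^{r})^{s_r(H)/(s_r(H)+r)}\le(\underline{\xi^{H}})^{-1}$. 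If instead $\Gamma\subset H^{*}(i)$ is a finite maximal antichain, the same cylinders now partition $H_\infty(i)$, so the sum equals $\nu^{H}(H_\infty(i))=\xi^{H}_i$, giving $\xi^{H}_i(\overline{\xi^{H}})^{-1}\le\sum_{\sigma\in\Gamma}(p_\sigma c_\sigma^{r})^{s_r(H)/(s_r(H)+r)}\le\xi^{H}_i(\underline{\xi^{H}})^{-1}$. Since $\SC(G)$ is finite and each $H$ has finitely many vertices, and since $\xi^{H}_i\le1$ for every $i\in H$, the choices $M_1:=\min_{H\in\SC(G)}\underline{\xi^{H}}/\overline{\xi^{H}}$ and $M_2:=\max_{H\in\SC(G)}(\underline{\xi^{H}})^{-1}$ are strictly positive and finite and dominate all these bounds, which is exactly (\ref{g13}).

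The substantive points needing attention are the Kolmogorov consistency of the prescription above, and the observation that the construction uses only the eigenvector identity for the irreducible matrix $A_{H,s_r(H)}$ --- in particular it does not require any analog of (\ref{cardpij>0}) inside $H$, which would fail in general since a vertex of $H$ may have out-degree one within $H$. Everything else is the soft ``maximality forces the total mass'' mechanism already used in the irreducible case, together with the trivial reduction to uniform constants over the finitely many components of $\SC(G)$.
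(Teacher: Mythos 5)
Your proposal is correct and follows essentially the same route as the paper: a Perron--Frobenius right eigenvector of $A_{H,s_r(H)}$ is used to define a Mauldin--Williams-type probability measure on the sub-shift, Kolmogorov consistency makes it well defined, and the partition property of maximal antichains gives the two-sided bounds, with uniform constants obtained by minimizing over the finitely many $H\in\SC(G)$. The only cosmetic difference is that the paper obtains the upper bound by quoting Lemma~\ref{lem2} for each $H$ and defines the auxiliary measure only on $H_\infty(i)$, whereas you build one measure $\nu^{H}$ on all of $H_\infty$ and read off both the $H^{*}$ and $H^{*}(i)$ bounds from it at once.
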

\begin{proof}
By Lemma \ref{lem2}, for each $H$, one can choose a constant $M_{H}$
such that the second inequality in (\ref{g13}) holds with $M_{H}$
in place of $M_{2}$. Set $M_{2}:=\max\{M_{H}:H\in\SC(G)\}$. Then
for every $H\in\SC(G)$ and every finite maximal antichain $\Gamma$
in $H^{*}$, we have 
\begin{eqnarray}
\sum_{\sigma\in\Gamma}\left(p_{\sigma}c_{\sigma}^{r}\right)^{s_{r}\left(H\right)/(s_{r}\left(H\right)+r)}\leq M_{2}\label{g18}
\end{eqnarray}
Since every finite maximal antichain $\Gamma(i)$ in $H^{*}(i)$ is
contained in a finite maximal antichain in $H^{*}$, we conclude that
the second inequality of (\ref{g13}) also holds for such a $\Gamma(i)$.
Next we need to choose $M_{1}$ such that the first inequality also
holds for such a $\Gamma(i)$.

Let $H\in\SC(G)$ and $i\in H$. We denote by $\zeta$ the unique
normalized positive right eigenvector of $A_{H,s_{r}(H)}$ with respect
to the Perron-Frobenius eigenvalue $1$. We consider the measure $\nu_{1,i}$
on $H_{\infty}(i)$ satisfying 
\begin{eqnarray*}
\nu_{1,i}([\sigma]) & := & \left(p_{\sigma}c_{\sigma}^{r}\right)^{s_{r}\left(H\right)/(s_{r}\left(H\right)+r)}\zeta_{\sigma_{|\sigma|}},\;\sigma\in H^{*}(i);\\
\nu_{1,i}(H_{\infty}(i) & = & \sum_{j=1}^{N}\left(p_{ij}c_{ij}^{r}\right)^{s_{r}\left(H\right)/(s_{r}\left(H\right)+r)}\zeta_{j}=\zeta_{i},
\end{eqnarray*}
where $[\sigma]:=\{\omega\in H_{\infty}(i):\omega|_{|\sigma|}=\sigma\}$.
Then for every finite maximal antichain $\Gamma(i)\subset H^{*}(i)$,
we have 
\[
\overline{\zeta}^{-1}\zeta_{i}\leq\sum_{\sigma\in\Gamma(i)}\left(p_{\sigma}c_{\sigma}^{r}\right)^{s_{r}\left(H\right)/(s_{r}\left(H\right)+r)}\leq\underline{\zeta}^{-1}\zeta_{i}.
\]
Let $m_{H}:=\min\{\overline{\zeta}^{-1}\zeta_{i}:i\in H\}$ and $M_{1}:=\min\{m_{H}:H\in\SC(G)\}$.
Then (\ref{g13}) holds for every every finite maximal antichain $\Gamma(i)$
in $H^{*}(i)$ and finite maximal antichain in $H^{*}$. This completes
the proof of the lemma.\end{proof}
\begin{lem}
\label{g7} Let $H\in\SC(G)$ and $i\in H$. Then $\underline{Q}_{r}^{s_{r}(H)}(\mu(\cdot|J_{i}))>0$.\end{lem}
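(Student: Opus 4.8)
The plan is to transplant the irreducible-case estimate of Lemma~\ref{g14} into the subsystem of paths in $H$ issued from $i$, and to do this using only a \emph{lower} bound for the quantization error, so that the escape of $\mu$-mass out of $H$ does no harm. Write $s:=s_r(H)$ and $\rho:=\mu(\cdot|J_i)$. Since $|J_i|=1$ and $\mu(J_i)=\chi_i$, every $\sigma\in H^*(i)$ satisfies $|J_\sigma|=c_\sigma$ and $\rho(J_\sigma)=\mu(J_\sigma)/\chi_i=p_\sigma$, so $\rho(J_\sigma)|J_\sigma|^r=p_\sigma c_\sigma^r$; note also that, $s_r(H)$ being defined, $H$ carries infinitely many paths and $H_\infty(i)\ne\emptyset$.

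For $m\ge1$ I would introduce the uniform antichain
\[
\Gamma_m(i):=\bigl\{\sigma\in H^*(i)\,:\,p_{\sigma^-}c_{\sigma^-}^r\ge\eta^m>p_\sigma c_\sigma^r\bigr\},
\]
and check, exactly as for $\Lambda_{j,r}$ in (\ref{lambdajr}), that $\Gamma_m(i)$ is a finite maximal antichain in $H^*(i)$; maximality uses that $p_{\tau|_k}c_{\tau|_k}^r\downarrow 0$ for every $\tau\in H_\infty(i)$. On $\Gamma_m(i)$ one has $\eta^{m+1}\le p_\sigma c_\sigma^r<\eta^m$, so applying Lemma~\ref{g14} to $\Gamma_m(i)$ gives $M_1\le\sum_{\sigma\in\Gamma_m(i)}(p_\sigma c_\sigma^r)^{s/(s+r)}\le M_2$ and therefore, with $\phi_m:={\rm card}\,\Gamma_m(i)$,
\[
\phi_m\asymp\eta^{-ms/(s+r)},\qquad\text{equivalently}\qquad\eta^m\asymp\phi_m^{-(s+r)/s}.
\]
The counting argument from the proof of Lemma~\ref{g9}, with $\Omega_1$ replaced by $H$, shows moreover that $\phi_m\le\phi_{m+1}\le C_H\,\phi_m$ for some constant $C_H$, and $\phi_m\to\infty$.

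I would then bound $e_{\phi_m,r}(\rho)$ from below. The cylinders $\{J_\sigma:\sigma\in\Gamma_m(i)\}$ inherit the separation $d(J_\sigma,J_{\sigma'})\ge t\max\{|J_\sigma|,|J_{\sigma'}|\}$ for $\sigma\ne\sigma'$: apply (\ref{g4}) to the longest common prefix $\tau$ and the two distinct symbols $a,b$ following it, using $|J_{\tau\ast a}|\ge|J_\sigma|$ and $|J_{\tau\ast b}|\ge|J_{\sigma'}|$. Restricting the integral in (\ref{quanerrordef}) to $\bigcup_{\sigma\in\Gamma_m(i)}J_\sigma$ and running the arguments of \citep[Lemmas~3 and~4]{Zhu:08} used for (\ref{characterization}) --- which rely only on (\ref{g4}) and (\ref{cardpij>0}) and are indifferent to whether the antichain exhausts $J_i$ --- one obtains a constant $D>0$, independent of $m$ and $i$, with
\[
e_{\phi_m,r}^r(\rho)\ \ge\ \inf_{{\rm card}\,\alpha\le\phi_m}\sum_{\sigma\in\Gamma_m(i)}\int_{J_\sigma}d(x,\alpha)^r\,d\rho(x)\ \ge\ D\sum_{\sigma\in\Gamma_m(i)}p_\sigma c_\sigma^r.
\]
Since $\sum_{\sigma\in\Gamma_m(i)}p_\sigma c_\sigma^r\ge\phi_m\eta^{m+1}\asymp\phi_m^{-r/s}$, this gives $\liminf_m\phi_m^{r/s}e_{\phi_m,r}^r(\rho)>0$. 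Finally, as $\phi_{m+1}\asymp\phi_m$, $\phi_m\to\infty$, and $n\mapsto e_{n,r}(\rho)$ is non-increasing, the passage from the subsequence $(\phi_m)$ to all $n$ --- exactly as in Lemma~\ref{g9}, via \citep[Lemma~2.4]{Zhu:12} --- yields $\underline{Q}_r^{s}(\rho)=\liminf_n n^{r/s}e_{n,r}^r(\rho)>0$, which is the assertion.

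The step I expect to be the main obstacle is the first one: verifying that $\Gamma_m(i)$ is a genuinely \emph{maximal} antichain \emph{in} $H^*(i)$ --- this is precisely what makes Lemma~\ref{g14}, the substitute for the irreducible estimate, applicable --- and that the sibling-separation (\ref{g4}) propagates to arbitrary antichain cylinders. The error estimate is the technically heaviest input, but it is already available in the form used for (\ref{characterization}); the only new point is that it is applied to the partial antichain $\Gamma_m(i)\subset H^*(i)$ and to the conditional measure $\rho$, which is legitimate because we never require a lower bound for $\rho\bigl(\bigcup_{\sigma\in\Gamma_m(i)}J_\sigma\bigr)$.
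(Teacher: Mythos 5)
Your proof is correct, and it takes a somewhat different and arguably cleaner route than the paper's. Both arguments hinge on the same sub-antichain in $H^*(i)$ and on Lemma~\ref{g14}, but they normalise by a different $n$. The paper works with $\Lambda_{k,r}(i)=\{\sigma\in\Omega^*:\sigma_1=i,\ p_{\sigma^-}c_{\sigma^-}^r\ge\eta^k>p_\sigma c_\sigma^r\}$ (the full maximal antichain, including words that leave $H$), proves $e_{\phi_{k,r}(i),r}^r(\rho)\gg\sum_{\Lambda_{k,r}(i)}p_\sigma c_\sigma^r$, then applies H\"older's inequality with exponent $s_r(H)/(s_r(H)+r)<1$ and discards the terms escaping $H$ so as to invoke Lemma~\ref{g14} on $\Lambda_{k,r}(i)\setminus\Lambda_{k,r}(i,H^c)$ --- which is exactly your $\Gamma_m(i)$. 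You instead take $n=\phi_m={\rm card}\,\Gamma_m(i)$ directly, exploit the built-in comparability $\eta^{m+1}\le p_\sigma c_\sigma^r<\eta^m$ to dispense with H\"older entirely, and pay for this by applying the Zhu-type lower bound (the separation/firewall argument behind (\ref{s30})--(\ref{s31})) to a \emph{non-exhausting} antichain; your observation that this argument only needs pairwise separation, (\ref{cardpij>0}), and ${\rm card}\,\alpha\le{\rm card}\,\Gamma_m(i)$, and is oblivious to the mass of $\rho$ outside $\bigcup_{\sigma\in\Gamma_m(i)}J_\sigma$, is correct and is the one genuinely new point your route requires. The growth control $\phi_m\le\phi_{m+1}\le C_H\phi_m$ you need for the subsequence-to-sequence passage follows from the same $N_1$-step counting argument as in Lemma~\ref{g9}, as you say. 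Net: your version trades the H\"older step for a slightly more ambitious use of the local lower-bound lemma; the paper's version reuses the exhausting-antichain machinery verbatim and localises with a convexity inequality. Both are sound.
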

\begin{proof}
For each $k\geq1$, we write 
\begin{eqnarray}
 &  & \Lambda_{k,r}(i):=\left\{ \sigma\in\Omega^{*}:\sigma_{1}=i,p_{\sigma^{-}}c_{\sigma^{-}}^{r}\geq\eta^{k}>p_{\sigma}c_{\sigma}^{r}\right\} ,\label{s20}\\
 &  & \Lambda_{k,r}(i,H^{c}):=\left\{ \sigma\in\Lambda_{k,r}(i):\sigma_{h}\notin H\;\mbox{for some }\; h\right\} .\nonumber 
\end{eqnarray}
For $\sigma\in\Lambda_{k,r}(i,H^{c})$, we write $h(\sigma):=\min\{h:\sigma_{h}\notin H\}$.
Then, since $H$ is a strongly connected component, we deduce that
$\sigma_{l}\notin H$ for all $l\geq h(\sigma)$. Note that $A_{H,s_{r}(H)}$
is irreducible and that $\Lambda_{k,r}(i)\setminus\Lambda_{k,r}(i,H^{c})$
is a maximal finite antichain in $H^{*}(i)$. Hence, by Lemma \ref{g14},
we have 
\[
\sum_{\sigma\in\Lambda_{k,r}(i)\setminus\Lambda_{k,r}(i,H^{c})}\left(p_{\sigma}c_{\sigma}^{r}\right)^{s_{r}\left(H\right)/(s_{r}\left(H\right)+r)}\geq M_{1}>0.
\]
Let $\phi_{k,r}(i)$ denote the cardinality of $\Lambda_{k,r}(i)$.
As we did for (\ref{characterization}), one can show 
\[
e_{\phi_{k,r}(i),r}^{r}\big(\mu(\cdot|J_{i})\big)\gg\sum_{\sigma\in\Lambda_{k,r}(i)}p_{\sigma}c_{\sigma}^{r}.
\]
By H\"older's inequality for exponent less than one, we have 
\begin{eqnarray*}
e_{\phi_{k,r}(i),r}^{r}\big(\mu(\cdot|J_{i})\big) & \gg & \bigg(\sum_{\sigma\in\Lambda_{k,r}(i)}(p_{\sigma}c_{\sigma}^{r})^{\frac{s_{r}(H)}{s_{r}(H)+r}}\bigg)^{\frac{s_{r}(H)+r}{s_{r}(H)}}\cdot\phi_{k,r}(i)^{-r/s_{r}\left(H\right)}\\
 & \geq & \bigg(\sum_{\sigma\in\Lambda_{k,r}(i)\setminus\Lambda_{k,r}(i,H^{c})}(p_{\sigma}c_{\sigma}^{r})^{\frac{s_{r}(H)}{s_{r}(H)+r}}\bigg)^{\frac{s_{r}(H)+r}{s_{r}(H)}}\cdot\phi_{k,r}(i)^{-r/s_{r}\left(H\right)}\\
 & \gg & \phi_{k,r}(i)^{-r/s_{r}\left(H\right)}.
\end{eqnarray*}
This and Lemma \ref{g9} yields that $\underline{Q}_{r}^{s_{r}(H)}(\mu(\cdot|J_{i}))>0$.
The lemma follows. \end{proof}
\begin{prop}
\label{mthm2} For any Markov-type measure $\mu$ as defined in (\ref{markovmeasure})
we have $D_{r}(\mu)=s_{r}$ and $\underline{Q}_{r}^{s_{r}}(\mu)>0$.\end{prop}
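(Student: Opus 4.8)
The plan is to bootstrap from the irreducible case (Proposition \ref{mthm2'}) to the general reducible case by a two-part argument: first establish the lower bound $\underline{Q}_r^{s_r}(\mu)>0$, then use it together with the matching upper bound $\overline{D}_r(\mu)\le s_r$ to conclude $D_r(\mu)=s_r$. For the lower bound, the key observation is that $\mu$ restricted (and renormalized) to any cylinder $J_i$ has positive weight $p_i=\chi_i>0$, so for any finite point set $\alpha$ with $\mathrm{card}(\alpha)\le n$ one has $\int d(x,\alpha)^r\,d\mu(x)\ge p_i\int d(x,\alpha)^r\,d\mu(\cdot|J_i)(x)\ge p_i\, e_{n,r}^r(\mu(\cdot|J_i))$. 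By Lemma \ref{factor} we may pick $H^\ast\in\SC(G)$ with $s_r(H^\ast)=s_r$ and a vertex $i^\ast\in H^\ast$; Lemma \ref{g7} then gives $\underline{Q}_r^{s_r}(\mu(\cdot|J_{i^\ast}))>0$, i.e. $e_{n,r}^r(\mu(\cdot|J_{i^\ast}))\gg n^{-r/s_r}$. Combining these, $e_{n,r}^r(\mu)\ge p_{i^\ast}\,e_{n,r}^r(\mu(\cdot|J_{i^\ast}))\gg n^{-r/s_r}$, which is exactly $\underline{Q}_r^{s_r}(\mu)>0$. Since $\underline{D}_r(\mu)$ is the critical exponent where the lower quantization coefficient jumps from $0$ to $\infty$, positivity of $\underline{Q}_r^{s_r}(\mu)$ already forces $\underline{D}_r(\mu)\ge s_r$, hence $D_r(\mu)\ge s_r$ provided the dimension exists.

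For the upper bound, I would use the antichains $\Lambda_{j,r}$ and the characterization \eqref{characterization}: $e_{\phi_{j,r},r}^r(\mu)\asymp\sum_{\sigma\in\Lambda_{j,r}}p_\sigma c_\sigma^r$. The issue is that Lemma \ref{lem2} only applies in the irreducible case, so $\sum_{\sigma\in\Lambda_{j,r}}(p_\sigma c_\sigma^r)^{s_r/(s_r+r)}$ need no longer be bounded above by an absolute constant; in the reducible case it may grow polynomially in $j$ (this is precisely the phenomenon behind the finiteness criterion for $\overline{Q}_r^{s_r}$). However, for the purpose of proving $D_r(\mu)=s_r$ one only needs a subexponential bound. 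The plan is to decompose each $\sigma\in\Lambda_{j,r}$ according to how its path through $G$ visits the strongly connected components: since $G$ has finitely many components, any path visits at most $\mathrm{card}(\SC(G))$ components, and on the "transient" stretches between components the weight $p_\sigma c_\sigma^r$ decays geometrically. Using Lemma \ref{g14} to control the within-component stretches (with the component exponent $s_r(H)\le s_r$) and a crude counting estimate on the bounded-length transient stretches, one obtains $\phi_{j,r}\le C\, j^{M}\,\eta^{-j s_r/(s_r+r)}$ for constants $C,M$ depending only on $G$. Hence $\log\phi_{j,r}=\big(s_r/(s_r+r)\big)\,j\log(1/\eta)+O(\log j)$, and since $e_{\phi_{j,r},r}^r(\mu)\asymp\sum_{\sigma\in\Lambda_{j,r}}p_\sigma c_\sigma^r\le\phi_{j,r}\,\eta^j$, one computes $\log e_{\phi_{j,r},r}(\mu)$ and finds $\log\phi_{j,r}/(-\log e_{\phi_{j,r},r}(\mu))\to s_r$ (the $O(\log j)$ terms are negligible against the linear-in-$j$ main terms). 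Together with $\phi_{j,r}\asymp\phi_{j+1,r}$ from the proof of Lemma \ref{g9}, this interpolates to all $n$ and yields $\overline{D}_r(\mu)\le s_r$.

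Combining the two halves, $s_r\le\underline{D}_r(\mu)\le\overline{D}_r(\mu)\le s_r$, so $D_r(\mu)=s_r$, and $\underline{Q}_r^{s_r}(\mu)>0$ was already shown. I expect the main obstacle to be the careful bookkeeping in the subexponential counting bound for $\phi_{j,r}$ in the reducible case: one must organize the path decomposition so that the geometric decay on transient edges genuinely dominates the combinatorial count of ways to thread through the component poset, and verify that the exponents $s_r(H)$ for the visited components, all being $\le s_r$, combine correctly rather than compound. Once that estimate is in place, everything else is routine manipulation of the defining $\limsup$/$\liminf$.
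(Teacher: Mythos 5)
Your lower-bound argument reproduces the paper's exactly: choose $H\in\mathcal{M}$ (equivalently, $H\in\SC(G)$ with $s_r(H)=s_r$, which exists by Lemma~\ref{factor}), fix $i_0\in H$, invoke Lemma~\ref{g7} to get $\underline{Q}_r^{s_r}(\mu(\cdot|J_{i_0}))>0$, and then use $e_{n,r}^r(\mu)\ge\mu(J_{i_0})\,e_{n,r}^r(\mu(\cdot|J_{i_0}))$ with $\mu(J_{i_0})=\chi_{i_0}\ge\underline{\chi}>0$. (One small notational slip: the weight of $J_i$ under $\mu$ is $\chi_i$, not $p_i$; you say this correctly in words but the symbol $p_i$ is overloaded with the transition probabilities.)

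For the upper bound $\overline{D}_r(\mu)\le s_r$, you propose a genuinely different route from the paper, and a strictly harder one. You aim to work at the critical exponent $s_r$ and show $\phi_{j,r}\ll j^M\,\eta^{-js_r/(s_r+r)}$ by decomposing paths through the poset of strongly connected components and controlling within-component stretches with Lemma~\ref{g14}. This can be made to work — indeed a polynomial bound on $\sum_{\sigma\in\Lambda_{j,r}}(p_\sigma c_\sigma^r)^{s_r/(s_r+r)}$ does hold, with degree governed by the longest $\prec$-chain in $\mathcal{M}$ — but the combinatorics you flag as "the main obstacle" is real and is essentially the content of the more delicate Propositions~\ref{mthm3} and~\ref{mthm4}. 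The paper sidesteps all of this for the dimension upper bound by the standard trick of perturbing the exponent: fix any $s>s_r$. Then by \citep[Theorem 2]{MW:88} one has $\Psi_G(s)<1$, and Gelfand's formula gives $\sum_{\sigma\in\Omega_k}(p_\sigma c_\sigma^r)^{s/(s+r)}=\|A_{G,s}^{k-1}u\|_1<t^{k-1}$ for some $t\in(\Psi_G(s),1)$ and all large $k$, where $u$ is the all-ones vector. Summing over the bounded window of word lengths $l_{1j}\le k\le l_{2j}$ (which is $O(j)$ wide) yields $\sum_{\sigma\in\Lambda_{j,r}}(p_\sigma c_\sigma^r)^{s/(s+r)}<1$ for large $j$, no polynomial correction needed. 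This immediately gives $\phi_{j,r}\le\eta^{-s(j+1)/(s+r)}$ and $e_{\phi_{j,r},r}^r(\mu)\ll\eta^{rj/(s+r)}\ll\phi_{j,r}^{-r/s}$, hence $\overline{D}_r(\mu)\le s$ by Lemma~\ref{g9}, and letting $s\downarrow s_r$ finishes. The moral: when you only need a dimension bound (not the finiteness of the quantization coefficient at the critical exponent), it is cleaner to work strictly above the critical exponent, where the relevant spectral radius drops below $1$ and geometric decay makes all the bookkeeping disappear. Your path-decomposition machinery is the right tool for the coefficient analysis in Propositions~\ref{mthm3} and~\ref{mthm4}, but it is overkill here.
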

\begin{proof}
Let $s>s_{r}$. By \citep[Theorem 2]{MW:88}, we have, $\Psi_{G}(s)<\Psi_{G}(s_{r})=1$.
Let $u=(u_{i})_{i=1}^{N}$ be the column vector with $u_{i}=1$ for
all $1\leq i\leq N$. We choose $t$ such that $\Psi_{G}(s)<t<1$.
By Gelfand's formula, we have 
\[
\lim_{k\to\infty}\|A_{G,s}^{k}u\|_{1}^{1/k}=\Psi_{G}(s)<t<1.
\]
Thus, for large $k$, we have that $\|A_{G,s}^{k}u\|_{1}<t^{k}$.
It follows that 
\begin{eqnarray}
\sum_{\sigma\in\Omega_{k}}\left(p_{\sigma}c_{\sigma}^{r}\right)^{s/(s+r)}=\|A_{G,s}^{k-1}u\|_{1}<t^{k-1}.\label{s10}
\end{eqnarray}
Let $\Lambda_{j,r}$ be as defined in (\ref{lambdajr}). It is immediate
to see that, there exist two constants $A_{1},A_{2}>0$ such that
\[
A_{1}j\leq l_{1j}\leq l_{2j}<A_{2}j.
\]
Applying (\ref{s10}) to every $l_{1j}\leq k\leq l_{2j}$, we deduce
\begin{eqnarray*}
\sum_{\sigma\in\Lambda_{j,r}}\left(p_{\sigma}c_{\sigma}^{r}\right)^{s/(s+r)} & \leq & \sum_{k=l_{1j}}^{l_{2j}}\sum_{\sigma\in\Omega_{k}}\left(p_{\sigma}c_{\sigma}^{r}\right)^{s/(s+r)}\\
 & \leq & \sum_{k=l_{1j}}^{l_{2j}}t^{k-1}\leq\frac{t^{l_{1j}-1}}{1-t}<1\;\;{\rm for\; large\;\; j}.
\end{eqnarray*}
Thus, by (\ref{lambdajr}), for all large $j$, we have $\phi_{j,r}\leq\eta^{-s\left(j+1\right)/\left(s+r\right)}$.
Also, by (\ref{characterization}), 
\begin{eqnarray*}
e_{\phi_{j,r},r}(\mu)\leq\overline{\chi}\sum_{\sigma\in\Lambda_{j,r}}(p_{\sigma}c_{\sigma}^{r})^{s/(s+r)}(p_{\sigma}c_{\sigma}^{r})^{r/(s+r)}\leq\overline{\chi}\eta^{rj/(s+r)}\leq\overline{\chi}\phi_{j,r}^{-r/s}.
\end{eqnarray*}
Thus, by Lemma \ref{g9}, we have, $\overline{D}_{r}(\mu)\leq s$.
Since $s>s_{r}$ was chosen arbitrarily, we obtain that $\overline{D}_{r}(\mu)\leq s_{r}$.

Let $H\in\mathcal{M}$. Then we have $s_{r}(H)=s_{r}$. We take an
arbitrary vertex $i_{0}\in H$ and consider the conditional probability
measure $\mu_{i_{0}}:=\mu(\cdot|J_{i_{0}})$. By Lemma \ref{g7},
we have, $\underline{Q}_{r}^{s_{r}}(\mu_{i_{0}})>0$. Hence, 
\begin{eqnarray*}
\underline{Q}_{r}^{s_{r}}(\mu)\geq\mu(J_{i_{0}})\underline{Q}_{r}^{s_{r}}(\mu_{i_{0}})\geq\underline{\chi}\,\underline{Q}_{r}^{s_{r}}(\mu_{i_{0}})>0.
\end{eqnarray*}
In particular, by \citep[Proposition 11.3]{GL:00}, we have, $\underline{D}_{r}(\mu)\geq s_{r}$.
Combing this and the first part of the proof, we conclude that $D_{r}(\mu)$
exists and equals $s_{r}$. This completes the proof of the theorem.
\end{proof}
Define the set $F:=G\setminus\bigcup_{H\in\mathcal{M}}H$ which is
possibly empty. Whenever $F\neq\emptyset$, there corresponds a sub-matrix
$A_{F,s_{r}}$ of $A_{G,s_{r}}$. We write 
\begin{eqnarray*}
F_{0}:=\{\theta\},\;\; F_{k}:=\{\sigma\in\Omega_{k}:\sigma_{h}\in F,1\leq h\leq k\},\; k\geq1;\;\; F^{*}:=\bigcup_{k=0}^{\infty}F_{k}.
\end{eqnarray*}

\begin{lem}
\label{s26} There exists a constant $t\in(0,1)$ such that for $n\in\mathbb{N}$
large 
\[
\sum_{\sigma\in F_{n}}\left(p_{\sigma}c_{\sigma}^{r}\right)^{s_{r}/\left(s_{r}+r\right)}\ll t^{n}.
\]
\end{lem}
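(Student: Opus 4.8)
The plan is to recognize the sum over $F_{n}$ as the $\ell^{1}$-norm of an iterate of the sub-matrix $A_{F,s_{r}}$ applied to the all-ones vector, to argue that this matrix has spectral radius strictly less than $1$, and then to conclude by Gelfand's formula exactly as in the proof of Proposition \ref{mthm2}.

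First I would observe that, for $\sigma=(\sigma_{1},\dots,\sigma_{n})\in F_{n}$, the conventions of Section 2 give $(p_{\sigma}c_{\sigma}^{r})^{s_{r}/(s_{r}+r)}=\prod_{h=1}^{n-1}b_{\sigma_{h}\sigma_{h+1}}(s_{r})$, and since $b_{ij}(s_{r})$ vanishes whenever $p_{ij}=0$, summing this product over all vertex sequences in $F$ of length $n$ automatically picks out exactly the words in $F_{n}$. Writing $u$ for the all-ones column vector indexed by $F$ and using non-negativity of the entries, this yields $\sum_{\sigma\in F_{n}}(p_{\sigma}c_{\sigma}^{r})^{s_{r}/(s_{r}+r)}=\|A_{F,s_{r}}^{\,n-1}u\|_{1}$. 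Granting $\rho(A_{F,s_{r}})<1$, I would pick $t$ with $\rho(A_{F,s_{r}})<t<1$; Gelfand's formula gives $\|A_{F,s_{r}}^{\,k}u\|_{1}^{1/k}\to\rho(A_{F,s_{r}})$, so $\|A_{F,s_{r}}^{\,k}u\|_{1}<t^{k}$ for all large $k$, and hence $\sum_{\sigma\in F_{n}}(p_{\sigma}c_{\sigma}^{r})^{s_{r}/(s_{r}+r)}<t^{n-1}\ll t^{n}$ for all large $n$, which is the claim. (If $F=\emptyset$ the sum is empty and the estimate is trivial.)

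The heart of the argument — and the step I expect to require the most care — is showing $\rho(A_{F,s_{r}})<1$. Here I would first identify the strongly connected components of the subgraph $F$: since distinct components of $G$ are vertex-disjoint and $F=G\setminus\bigcup_{H\in\mathcal{M}}H$, every $H\in\SC(G)\setminus\mathcal{M}$ is disjoint from each element of $\mathcal{M}$ and hence lies entirely in $F$; conversely, any strongly connected subgraph of $F$ extends to a unique component of $G$, which cannot belong to $\mathcal{M}$ (else it would be disjoint from $F$), so $\SC(F)=\SC(G)\setminus\mathcal{M}$. For each nontrivial $H\in\SC(F)$, Lemma \ref{factor} gives $s_{r}(H)\le\max_{H'\in\SC(G)}s_{r}(H')=s_{r}$, and $H\notin\mathcal{M}$ forces $s_{r}(H)<s_{r}$; since $\Psi_{H}$ is strictly decreasing (\citep[Theorem 2]{MW:88}) with $\Psi_{H}(s_{r}(H))=1$, it follows that $\rho(A_{H,s_{r}})=\Psi_{H}(s_{r})<1$. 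Finally, as recorded in the remark following Lemma \ref{factor}, a symmetric permutation brings $A_{F,s_{r}}$ to block upper triangular form whose diagonal blocks are the irreducible matrices $A_{H,s_{r}}$, $H\in\SC(F)$, together with $1\times1$ null matrices; hence $\rho(A_{F,s_{r}})=\max_{H\in\SC(F)}\rho(A_{H,s_{r}})<1$, with the maximum understood to be $0$ when $F$ has no nontrivial component. Apart from this spectral bookkeeping for the reducible matrix $A_{F,s_{r}}$, every step is routine, so I do not anticipate any further difficulty.
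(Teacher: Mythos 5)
Your argument is correct and follows essentially the same route as the paper: the paper likewise derives $\Psi_{F}(s_{r})<1$ from the factor theorem together with strict monotonicity of $\Psi_{F}$, and then invokes the Gelfand-formula computation from the proof of Proposition \ref{mthm2}. You merely phrase the spectral step directly in terms of $\rho(A_{F,s_{r}})$ via the block-triangular decomposition rather than through the intermediate quantity $s_{r}(F)$, and you spell out the identification $\SC(F)=\SC(G)\setminus\mathcal{M}$ that the paper leaves implicit.
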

\begin{proof}
Let $s_{r}(F)$ denote the unique number with $\Psi_{F}(s_{r}(F))$=1.
Then by Lemma \ref{factor} and the definition of $F$, we deduce
\[
s_{r}(F)=\max\{s_{r}(H):\SC(G)\ni H\subset F\}<s_{r}.
\]
According to \citep[Theorem 3]{MW:88}, $\Psi_{F}(s)$ is strictly
decreasing with respect to $s$. Thus, $\Psi_{F}(s_{r})<1$ and we
may choose some $t>0$ such that $\Psi_{F}(s_{r})<t<1$. Following
the proof of Proposition \ref{mthm2}, one can see that, there exists
a constant $k_{0}\in\mathbb{N}$ such that for all $k\geq k_{0}$
we have 
\begin{eqnarray*}
\sum_{\sigma\in F_{k}}(p_{\sigma}c_{\sigma}^{r})^{s_{r}/\left(s_{r}+r\right)}\leq t^{k-1},\;\; k\geq k_{1}.
\end{eqnarray*}
From this the lemma follows. 
\end{proof}

\begin{prop}
\label{mthm3} If $\mathcal{M}$ consists of pairwise incomparable
elements then $\overline{Q}_{r}^{s_{r}}(\mu)<\infty$.\end{prop}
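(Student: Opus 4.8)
The plan is to bound $\overline{P}_r^{s_r}(\mu)$ (equivalently $\overline{Q}_r^{s_r}(\mu)$, by Lemma~\ref{g9}), by decomposing every word $\sigma\in\Lambda_{j,r}$ according to how it interacts with the strongly connected components in $\mathcal{M}$ and with the ``exceptional'' set $F$. Since the elements of $\mathcal{M}$ are pairwise incomparable, no path in $G$ can meet two distinct components of $\mathcal{M}$; hence along any infinite word $\tau\in\Omega_\infty$ there is at most one $H\in\mathcal{M}$ whose vertices are visited. This dichotomy lets me split $\Lambda_{j,r}$ into those $\sigma$ that never leave $F$ (call this $\Lambda_{j,r}^F$) and, for each $H\in\mathcal{M}$, those $\sigma$ that enter $H$ at some first time, say after a prefix $\rho\in F^*$ with $\rho*i$ the first vertex in $H$. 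The contribution of $\Lambda_{j,r}^F$ is controlled directly by Lemma~\ref{s26}: it is $\ll t^{l_{1j}}$ for some $t<1$, and since $l_{1j}\geq A_1 j$, this piece decays geometrically in $j$ and is therefore negligible.

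For the main contributions, fix $H\in\mathcal{M}$ (so $s_r(H)=s_r$) and a first-entry prefix $\rho\in F^*$ with $\rho^-\in F^*$ and next vertex $i\in H$. The words $\sigma\in\Lambda_{j,r}$ extending $\rho*i$ and staying in $H$ afterwards form, after factoring out $\rho$, a finite maximal antichain in $H^*(i)$ relative to the rescaled threshold $\eta^j/(p_\rho c_\rho^r)$. By Lemma~\ref{g14} the sum $\sum (p_\sigma c_\sigma^r/(p_\rho c_\rho^r))^{s_r/(s_r+r)}$ over this antichain is $\leq M_2$, so the contribution of these $\sigma$ to $\sum_{\sigma\in\Lambda_{j,r}}(p_\sigma c_\sigma^r)^{s_r/(s_r+r)}$ is at most $M_2\,(p_\rho c_\rho^r)^{s_r/(s_r+r)}$. (Words that enter $H$ and later leave it again must have left $F$ only through $H$ and cannot re-enter another $\mathcal{M}$-component; the part of such a word after leaving $H$ lies in a strictly smaller component, and one handles it by the same geometric-decay estimate as for $F$, so it contributes a further geometrically small factor.) Summing over all admissible first-entry prefixes $\rho\in F^*$ of every length and over the finitely many $H\in\mathcal{M}$, I get
\[
\sum_{\sigma\in\Lambda_{j,r}}(p_\sigma c_\sigma^r)^{s_r/(s_r+r)}\ll \sum_{k\geq 0}\sum_{\rho\in F_k}(p_\rho c_\rho^r)^{s_r/(s_r+r)}+t^{l_{1j}}\ll \sum_{k\geq 0} t^k + o(1)\ll 1,
\]
where the bound on $\sum_{\rho\in F_k}(p_\rho c_\rho^r)^{s_r/(s_r+r)}$ is exactly Lemma~\ref{s26}. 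Combined with the trivial lower bound $\phi_{j,r}\eta^{j s_r/(s_r+r)}\gg 1$ (from the upper estimate $\psi(s_r)=1$ and Lemma~\ref{g14} applied to any single $H\in\mathcal M$), this gives $\phi_{j,r}\asymp \eta^{-j s_r/(s_r+r)}$, and then (\ref{characterization}) yields $\phi_{j,r}^{r/s_r}e_{\phi_{j,r},r}^r(\mu)\ll \phi_{j,r}^{r/s_r}\sum_{\sigma\in\Lambda_{j,r}}p_\sigma c_\sigma^r\ll \phi_{j,r}^{r/s_r}\eta^{jr/(s_r+r)}\cdot\sum_{\sigma\in\Lambda_{j,r}}(p_\sigma c_\sigma^r)^{s_r/(s_r+r)}\ll 1$, so $\overline{P}_r^{s_r}(\mu)<\infty$ and Lemma~\ref{g9} finishes the proof.

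The main obstacle is the bookkeeping of first-entry decompositions: I must verify that the map sending $\sigma\in\Lambda_{j,r}$ to the pair (first-entry prefix $\rho\in F^*$, component $H\in\mathcal{M}$) together with the tail in $H^*(i)$ is well defined and that the remaining ``post-$H$'' tail genuinely lives in a subgraph with spectral parameter $<s_r$, so that Lemma~\ref{s26}-type geometric decay applies there too. The incomparability hypothesis on $\mathcal{M}$ is precisely what prevents a word from straddling two maximal components and thus picking up two factors of size $O(1)$ instead of one, which would destroy the geometric summability in $\rho$; making that structural observation precise (essentially a condensation/DAG argument on $\mathrm{SC}(G)$) is the crux.
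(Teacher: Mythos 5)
Your proposal follows essentially the same route as the paper: decompose each $\sigma\in\Lambda_{j,r}$ into an $F^{*}$-prefix, an $H^{*}$-middle with $H\in\mathcal{M}$, and an $F^{*}$-suffix (the incomparability of $\mathcal{M}$ guarantees a word visits at most one component of $\mathcal{M}$, and once it leaves that component it lives in $F$), then control the $F$-sums by Lemma~\ref{s26} and the $H$-sum by Lemma~\ref{g14}, and finally feed the resulting bound $\sum_{\sigma\in\Lambda_{j,r}}(p_{\sigma}c_{\sigma}^{r})^{s_{r}/(s_{r}+r)}\ll1$ into (\ref{characterization}) via Lemma~\ref{g9}. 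The one point you gloss over, and where the paper is more careful, is the claim that for a fixed prefix the admissible $H$-middles form a single maximal antichain: in the paper the set $H_{j,r}^{*}(\nu',\nu'')$ is only covered by a bounded number $M_{3}$ of antichains (not maximal, and not just one), because the stopping condition defining $\Lambda_{j,r}$ acts on the full word $\nu'*\tau*\nu''$ rather than on $\tau$ alone; this costs only a harmless constant factor, so your argument still yields the correct bound once that technicality is acknowledged.
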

\begin{proof}
First note that in this situation we have that any $\omega\in\Omega^{*}$
has the form $\omega=\nu'*\tau*\nu''$ where $\nu',\nu''\in F^{*}$
and $\tau\in H^{*}$ for some $H\in\mathcal{M}$. Further, for $M_{3}:=\min\left\{ k\in\mathbb{N}:\overline{p}\,\overline{c}^{r}/\underline{p}\underline{c}^{r}<k\right\} $,
$H\in\mathcal{M}$ and any choice $\nu',\nu''\in F^{*}$, we have
that 
\[
H_{j,r}^{*}\left(\nu',\nu''\right):=\left\{ \tau\in H^{*}:\nu'*\tau*\nu''\in\Lambda_{j,r}\right\} \subset\bigcup_{k=1}^{M_{3}}\Gamma_{k}^{H,j}\left(\nu',\nu''\right),
\]
where $\Gamma_{k}^{H,j}\left(\nu',\nu''\right)$ is some antichain
for each $k=1,\ldots,M_{3}$. With this notation and using Lemmata
\ref{g14} and \ref{s26} and the definition of $t$ and $M_{2}$
therein we estimate
\begin{align*}
 & \negmedspace\negmedspace\negmedspace\negmedspace\negmedspace\sum_{\omega\in\Lambda_{j,r}}\left(p_{\omega}c_{\omega}^{r}\right)^{s_{r}/\left(s_{r}+r\right)}\\
= & ~\sum_{H\in\mathcal{M}}\sum_{\nu',\nu''\in F^{*}}\sum_{\tau\in H_{j,r}^{*}\left(\nu',\nu''\right)}\left(p_{\nu'*\tau_{1}}c_{\nu'*\tau_{1}}^{r}p_{\tau}c_{\tau}^{r}p_{\tau_{\left|\tau\right|}*\nu''}c_{\tau_{\left|\tau\right|}*\nu''}^{r}\right)^{s_{r}/\left(s_{r}+r\right)}\\
\leq & ~\sum_{H\in\mathcal{M}}\sum_{\nu',\nu''\in F^{*}}\left(p_{\nu'}c_{\nu'}^{r}p_{\nu''}c_{\nu''}^{r}\right)^{\frac{s_{r}}{s_{r}+r}}\left(\sum_{\rho\in\Omega_{2}\cup\left\{ \theta\right\} }\left(p_{\rho}c_{\rho}^{r}\right)^{\frac{s_{r}}{s_{r}+r}}\right)^{2}\ \sum_{k=1}^{M_{3}}\sum_{\Gamma_{k}^{H,j}\left(\nu',\nu''\right)}\left(p_{\tau}c_{\tau}^{r}\right)^{\frac{s_{r}}{s_{r}+r}}\\
\le & ~\mbox{card}\left(\mathcal{M}\right)\left(\sum_{n=0}^{\infty}\sum_{\nu\in F_{n}}\left(p_{\nu}c_{\nu}^{r}\right)^{s_{r}/\left(s_{r}+r\right)}\right)^{2}\left(N^{2}+1\right)^{2}M_{3}M_{2}\ \ll\left(\sum_{n=0}^{\infty}t^{n}\right)^{2}\ll1.
\end{align*}
Combining this with Lemma \ref{lem2}, for $j\geq1$, we get 
\[
\phi_{j,r}\eta^{js_{r}/(s_{r}+r)}\asymp\sum_{\sigma\in\Lambda_{j,r}}\left(p_{\sigma}c_{\sigma}^{r}\right)^{s_{r}/(s_{r}+r)}\ll1
\]
and hence $\eta^{j}\ll\phi_{j,r}^{-(s_{r}+r)/s_{r}}$. This, together
with (\ref{characterization}), leads to 
\[
e_{\phi_{j,r}}^{r}(\mu)\asymp\sum_{\sigma\in\Lambda_{j,r}}p_{\sigma}c_{\sigma}^{r}\asymp\phi_{j,r}\cdot\eta^{j}\ll\phi_{j,r}\cdot\phi_{j,r}^{-(s_{r}+r)/s_{r}}=\phi_{j,r}^{-r/s_{r}}.
\]
It follows that $\phi_{j,r}^{r/s_{r}}e_{\phi_{j,r}}^{r}(\mu)\ll1$.
Hence, the assertion follows by Lemma \ref{g9}.

\end{proof}
In order to estimate the quantization error from below, we need an
auxiliary measure of Mauldin-Williams-type. One may see \citep[p. 823]{MW:88}
for more details.

Assume that, there are two elements $H_{1},H_{2}\in\mathcal{M}$ such
that $H_{1}\prec H_{2}$, i.e., there exists a path $\gamma=\left(i_{1},\ldots,i_{h}\right)$
satisfying 
\begin{eqnarray}
i_{1}\in H_{1},\;\; i_{2},\ldots,i_{h-1}\notin H_{1}\cup H_{2},\;\; i_{h}\in H_{2}.\label{g26}
\end{eqnarray}
Let $v=(v_{i})_{i=m_{1}+1}^{m_{2}}$ be the positive normalized right
eigenvector of $A_{H_{2},s_{r}}$ with respect to the Perron-Frobenius
eigenvector $1$. Set 
\begin{eqnarray*}
E_{q} & := & \{\tau\in H_{1}^{q}:\tau_{q}=1\},\;\widetilde{\gamma}:=\{i_{2},\ldots,i_{h-1}\},\\
F_{q} & := & \big\{\tau\ast\widetilde{\gamma}\ast\rho:\tau\in E_{q},\rho\in H_{2}^{\mathbb{N}},\rho_{1}=i_{h}\big\}.
\end{eqnarray*}
For every $\tau\in E_{q}$ and $\rho\in H_{2}^{*}(i_{h})$, we define
\begin{eqnarray}
\nu_{q}([\tau\ast\widetilde{\gamma}\ast\rho])=\left(p_{\tau}c_{\tau}^{r}p_{\widetilde{\gamma}}c_{\widetilde{\gamma}}^{r}p_{\rho}c_{\rho}^{r}\right)^{s_{r}/(s_{r}+r)}v_{\rho_{|\rho|}}.\label{s24}
\end{eqnarray}
where $[\tau\ast\widetilde{\gamma}\ast\rho]:=\{\tau\ast\widetilde{\gamma}\ast\omega:\omega\in(H_{2})_{\infty},\omega|_{|\rho|}=\rho\}$.
By (\ref{s24}), we have 
\begin{eqnarray*}
\sum_{i\in H_{2}}\nu_{q}([\tau\ast\widetilde{\gamma}\ast\rho\ast i]) & = & \sum_{i\in H_{2}}\left(p_{\tau}c_{\tau}^{r}p_{\widetilde{\gamma}}c_{\widetilde{\gamma}}^{r}\right)^{s_{r}/(s_{r}+r)}\left(p_{\rho\ast i}c_{\rho\ast i}^{r}\right)^{s_{r}/(s_{r}+r)}v_{i}\\
 & = & \left(p_{\tau}c_{\tau}^{r}p_{\widetilde{\gamma}}c_{\widetilde{\gamma}}^{r}p_{\rho}c_{\rho}^{r}\right)^{s_{r}/(s_{r}+r)}\sum_{i\in H_{2}}\left(p_{\rho_{|\rho|}i}c_{\rho_{|\rho|}i}^{r}\right)^{s_{r}/(s_{r}+r)}v_{i}\\
 & = & \left(p_{\tau}c_{\tau}^{r}p_{\widetilde{\gamma}}c_{\widetilde{\gamma}}^{r}p_{\rho}c_{\rho}^{r}\right)^{s_{r}/(s_{r}+r)}v_{\rho_{|\rho|}}.
\end{eqnarray*}
Thus, by Kolmogorov consistency theorem, we get a unique measure $\nu_{q}$
on $F_{q}$.
\begin{prop}
\label{mthm4} Assume that there are two comparable elements in $\mathcal{M}$.
Then we have, $\underline{Q}_{r}^{s_{r}}(\mu)=\infty$.\end{prop}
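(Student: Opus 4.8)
The plan is to reduce the claim, via Lemma~\ref{g9} and the characterization (\ref{characterization}), to the divergence of the ``critical sums'' over the antichains $\Lambda_{j,r}$, and then to read off that divergence from the family of Mauldin--Williams-type measures $\nu_q$ built in (\ref{s24}). Since $\eta^{j+1}\le p_\sigma c_\sigma^r<\eta^{j}$ for every $\sigma\in\Lambda_{j,r}$ by (\ref{lambdajr}), formula (\ref{characterization}) gives
\[
\phi_{j,r}^{\,r/s_r}\,e_{\phi_{j,r},r}^{r}(\mu)\ \asymp\ \phi_{j,r}^{\,(s_r+r)/s_r}\,\eta^{j}\ \asymp\ \Big(\sum_{\sigma\in\Lambda_{j,r}}(p_\sigma c_\sigma^r)^{s_r/(s_r+r)}\Big)^{(s_r+r)/s_r},
\]
so, by Lemma~\ref{g9} (more precisely by the bounded-ratio property $\phi_{j,r}\le\phi_{j+1,r}\le N^{N_1}\phi_{j,r}$ established there together with \citep[Lemma~2.4]{Zhu:12}), it suffices to prove that $\sum_{\sigma\in\Lambda_{j,r}}(p_\sigma c_\sigma^r)^{s_r/(s_r+r)}\to\infty$ as $j\to\infty$.

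Fix the comparable pair $H_1\prec H_2$ in $\mathcal{M}$, the connecting path $\gamma=(i_1,\dots,i_h)$ of (\ref{g26}), and the sets $E_q$, $F_q$ and the measure $\nu_q$ of (\ref{s24}); recall $s_r(H_1)=s_r(H_2)=s_r$, and note $|H_1|,|H_2|\ge2$ since a one-point strongly connected component never lies in $\mathcal{M}$. Because $\Lambda_{j,r}$ is a maximal antichain, $F_q=\bigsqcup_{\sigma\in\Lambda_{j,r}}\big([\sigma]\cap F_q\big)$, hence $\nu_q(F_q)=\sum_{\sigma\in\Lambda_{j,r}}\nu_q\big([\sigma]\cap F_q\big)$. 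The argument rests on two estimates with constants \emph{independent of $q$ and $j$}: (a) $\nu_q(F_q)\asymp1$ --- indeed $\nu_q(F_q)=v_{i_h}\,(p_{\widetilde\gamma}c_{\widetilde\gamma}^{r})^{s_r/(s_r+r)}\cdot\big(\mathbf 1^{T}A_{H_1,s_r}^{\,q-1}\big)_{i_1}$, and all entries of $\mathbf 1^{T}A_{H_1,s_r}^{\,q-1}$ lie between fixed positive constants because $A_{H_1,s_r}$ is irreducible with spectral radius $\Psi_{H_1}(s_r)=1$ and hence has a positive left eigenvector; (b) $\nu_q\big([\sigma]\cap F_q\big)\le C\,(p_\sigma c_\sigma^r)^{s_r/(s_r+r)}$ for a fixed $C$ and every $\sigma$ with $[\sigma]\cap F_q\ne\emptyset$ --- when $\sigma$ reaches into the $H_2$-part this is immediate from (\ref{s24}), the ratio of the two sides being $v_{\rho_{|\rho|}}$ times the fixed constant $(p_{i_1i_2}c_{i_1i_2}^{r}p_{i_{h-1}i_h}c_{i_{h-1}i_h}^{r})^{-s_r/(s_r+r)}$, and when $\sigma$ still lies inside the $H_1$-part (or its junction with $\widetilde\gamma$) it follows from the same uniform bound on the entries of $A_{H_1,s_r}^{\,q-|\sigma|}$. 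Together (a) and (b) give, for every $q\ge1$ and every $j$,
\[
\sum_{\sigma\in\Lambda_{j,r}:\,[\sigma]\cap F_q\ne\emptyset}(p_\sigma c_\sigma^r)^{s_r/(s_r+r)}\ \ge\ C^{-1}\,\nu_q(F_q)\ \gg\ 1 .
\]

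It remains to see that for large $j$ these contributions come from disjoint families of words for many $q$. Let $A_1>0$ be the constant from the proof of Proposition~\ref{mthm2}, so $A_1 j\le l_{1j}=\min_{\sigma\in\Lambda_{j,r}}|\sigma|$, and restrict to $1\le q\le\lfloor A_1 j/2\rfloor$. For $j$ exceeding a constant depending only on $A_1$ and $h$, every $\sigma\in\Lambda_{j,r}$ satisfies $|\sigma|\ge A_1 j>q+h$; thus if $[\sigma]\cap F_q\ne\emptyset$ then $\sigma$ contains the entire length-$q$ excursion of $F_q$ inside $H_1$ followed by all of $\widetilde\gamma$, and since the interior vertices of $\gamma$ lie outside $H_1\cup H_2$, the number $q$ is recovered from $\sigma$ as the length of the initial block of $\sigma$ all of whose letters lie in $H_1$. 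Hence the sets $\{\sigma\in\Lambda_{j,r}:[\sigma]\cap F_q\ne\emptyset\}$, $1\le q\le\lfloor A_1 j/2\rfloor$, are pairwise disjoint, and summing the last display over these $q$ gives
\[
\sum_{\sigma\in\Lambda_{j,r}}(p_\sigma c_\sigma^r)^{s_r/(s_r+r)}\ \ge\ \sum_{q=1}^{\lfloor A_1 j/2\rfloor}\ \sum_{\sigma\in\Lambda_{j,r}:\,[\sigma]\cap F_q\ne\emptyset}(p_\sigma c_\sigma^r)^{s_r/(s_r+r)}\ \gg\ j ,
\]
which diverges as $j\to\infty$. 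By the reduction in the first paragraph this yields $\underline{Q}_{r}^{s_r}(\mu)=\infty$.

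The crux is obtaining (a) and (b) \emph{uniformly in $q$}: one must control all powers $A_{H_1,s_r}^{\,m}$ simultaneously, which is exactly where the irreducibility of the critical matrix $A_{H_1,s_r}$ --- equivalently the hypothesis $H_1\in\mathcal{M}$ --- is used, and one must know that a word of $\Lambda_{j,r}$ which has already passed into the $H_2$-part still carries the full, ``undiminished'' $\nu_q$-weight; this uses both that $\gamma$ runs strictly between the two critical components and that $H_2$ is itself critical, so that no Perron eigenvalue strictly below $1$ causes extra decay along $\rho$ (cf.\ Lemma~\ref{g14}). By comparison, the combinatorial bookkeeping --- that distinct excursion lengths $q$ genuinely produce disjoint families in $\Lambda_{j,r}$ --- is routine once $j\gg q$, which is guaranteed by the cut-off $q\le\lfloor A_1 j/2\rfloor$.
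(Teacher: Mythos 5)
Your proof is correct and follows essentially the same route as the paper: both reduce the claim to divergence of $\sum_{\sigma\in\Lambda_{j,r}}(p_\sigma c_\sigma^r)^{s_r/(s_r+r)}$, both use the Mauldin--Williams-type measures $\nu_q$ and the left Perron eigenvector of $A_{H_1,s_r}$ to obtain a $q$-uniform lower bound $\nu_q(F_q)\asymp1$, and both sum over a range of excursion lengths $q$ proportional to $l_{1j}$ to get divergence. The cosmetic differences (mass comparison via $\nu_q([\sigma]\cap F_q)$ rather than the paper's explicit subsets $F_q^\flat\subset\Lambda_{k,r}$, and the direct asymptotic reduction in place of the paper's H\"older step) do not change the substance of the argument.
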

\begin{proof}
Assume that, there are two elements $H_{1},H_{2}\in\mathcal{M}$ such
that $H_{1}\prec H_{2}$. Without loss of generality, as in \citep{MW:88},
we assume that $H_{1}=\{1,\ldots,m_{1}\},i_{1}=1$ and $H_{2}=\{m_{2}+1,\ldots,m_{1}+m_{2}\},i_{h}=m_{2}+1$;
and (\ref{g26}) holds . As above, let $\widetilde{\gamma}:=(i_{2},\ldots,i_{h-1})$.
For all large $k$, there exist some words $\sigma\in\Lambda_{k,r}$
taking the form $\sigma=\tau\ast\widetilde{\gamma}\ast\rho$ with
$\tau\in E_{q}$ and $\rho\in H_{2}^{*},\rho_{1}=m_{2}+1$.

For every $q\leq l_{1k}-1-h$ and $\tau\in E_{q}$, we have, $p_{\tau\ast\widetilde{\gamma}}c_{\tau\ast\widetilde{\gamma}}^{r}\geq\eta^{-k}$,
otherwise, $\min_{\sigma\in\Lambda_{k,r}}|\sigma|$ would be strictly
less than $l_{1k}$, contradicting the definition of $l_{1k}$. This
implies that $\Lambda_{k,r}$ includes some subset $F_{q}^{\flat}$
of $F_{q}^{*}$ such that $\{\rho:\tau\ast\widetilde{\gamma}\ast\rho\in F_{q}^{\flat}\}$
forms a finite maximal antichain in $H_{2}^{*}(m_{2}+1):=\{\sigma\in H_{2}^{*}:\sigma_{1}=m_{2}+1\}$.
For each $\sigma=\tau\ast\widetilde{\gamma}\ast\rho\in F_{q}^{\flat}$,
we have 
\begin{eqnarray*}
\left(p_{\tau\ast\widetilde{\gamma}\ast\rho}c_{\tau\ast\widetilde{\gamma}\ast\rho}^{r}\right)^{s_{r}/(s_{r}+r)} & = & \left(p_{\tau}c_{\tau}^{r}p_{1i_{2}}c_{1i_{2}}^{r}p_{\widetilde{\gamma}}c_{\widetilde{\gamma}}^{r}p_{i_{h-1}\rho_{1}}c_{i_{h-1}\rho_{1}}^{r}p_{\rho}c_{\rho}^{r}\right)^{s_{r}/\left(s_{r}+r\right)}\\
 & \geq & \left(p_{\tau}c_{\tau}^{r}p_{\widetilde{\gamma}}c_{\widetilde{\gamma}}^{r}p_{\rho}c_{\rho}^{r}\right)^{s_{r}/(s_{r}+r)}\eta^{\frac{2s_{r}}{s_{r}+r}}\geq\eta^{\frac{2s_{r}}{s_{r}+r}}\nu([\sigma])\overline{v}^{-1}.
\end{eqnarray*}
Using this facts, we deduce 
\begin{eqnarray*}
 &  & \negmedspace\negmedspace\negmedspace\negmedspace\negmedspace\negmedspace\negmedspace\negmedspace\negmedspace\negmedspace\negmedspace\negmedspace\sum_{\sigma\in\Lambda_{k,r}}\left(p_{\sigma}c_{\sigma}^{r}\right)^{s_{r}/(s_{r}+r)}\\
 & \geq & \sum_{q=1}^{l_{1k}-1-h}\sum_{\sigma\in F_{q}^{\flat}}(p_{\sigma}c_{\sigma}^{r})^{s_{r}/(s_{r}+r)}\geq\overline{v}^{-1}\eta^{2s_{r}/(s_{r}+r)}\sum_{q=1}^{l_{1k}-1-h}\nu_{q}(F_{q})\\
 & = & \overline{v}^{-1}\eta^{2s_{r}/(s_{r}+r)}\left(p_{\widetilde{\gamma}}c_{\widetilde{\gamma}}\right)^{s_{r}/(s_{r}+r)}v_{m_{1}+1}\sum_{q=1}^{l_{1k}-1-h}\sum_{\tau\in E_{q}}\left(p_{\tau}c_{\tau}^{r}\right)^{s_{r}/(s_{r}+r)}=:Q_{k},
\end{eqnarray*}
where $v=(v_{i})_{i=1}^{m_{2}}$ is the positive eigenvector in the
definition of the measures $\nu_{q}$ and $\overline{v}:=\max_{1\leq i\leq m_{2}}v_{i}$.
Note that $s_{r}(H_{1})=s_{r}$. Let $w=(w_{i})$ be a positive left
eigenvector $A_{H_{1},s_{r}}$ with respect to the Perron-Frobenius
eigenvector $1$. Then we have $wA_{H_{1},s_{r}}^{h}=w$ for all $h\geq1$.
Let $A_{H_{1},s_{r}}^{q-1}=(c_{ij})_{m_{1}\times m_{1}}$. We have
\[
\sum_{\tau\in E_{q}}\left(p_{\tau}c_{\tau}^{r}\right)^{s_{r}/\left(s_{r}+r\right)}=\sum_{i=1}^{m_{1}}c_{i1}\geq\overline{w}^{-1}w_{1}.
\]
This implies that $Q_{k}\to\infty$. Thus, by (\ref{characterization})
and H\"older's inequality, we have 
\[
e_{\phi_{k,r},r}^{r}(\mu)\geq D\bigg(\sum_{\sigma\in\Lambda_{k,r}}\left(p_{\sigma}c_{\sigma}^{r}\right)^{s_{r}/\left(s_{r}+r\right)}\bigg)^{\left(s_{r}+r\right)/s_{r}}\phi_{k,r}^{-r/s_{r}}\geq Q_{k}^{\left(s_{r}+r\right)/s_{r}}\phi_{k,r}^{-r/s_{r}}.
\]
Hence, by Lemma \ref{g9}, it follows that $\underline{Q}_{r}^{s_{r}}(\mu)=\infty$.
The proposition follows.
\end{proof}
\begin{proof}
[Proof of Theorem \ref{mthm1}] For the proof of Theorem \ref{mthm1}
we just have to combine Proposition \ref{mthm3} and \ref{mthm4}.
\end{proof}
Next, we construct two examples illustrating Theorem \ref{mthm1}.
\begin{example}
Let $Q=(q_{ij})_{2\times2},T=(t_{ij})_{3\times3}$ be two positive
matrices, i.e., $q_{ij}>0,1\leq i,j\leq2$ and $t_{ij}>0,1\leq i,j\leq3$.
We define 
\[
P=\left(\begin{array}{cc}
Q_{2\times2} & 0\\
0 & T_{3\times3}
\end{array}\right).
\]
Then $P$ is a reducible matrix. Let $\mu$ be the Markov-type measure
associated with $P$. Let $H_{1}:=\{1,2\}$ and $H_{2}:=\{3,4\}$.
Clearly, $\mathcal{M}=\{H_{1},H_{2}\}$ and $H_{1},H_{2}$ are incomparable.
Thus, by Theorem \ref{mthm3}, $0<\underline{Q}_{r}^{s_{r}}(\mu)\leq\overline{Q}_{r}^{s_{r}}(\mu)<\infty$. 
\end{example}
\begin{example}
Let the transition matrix be given by 
\[
P=(p_{ij})_{4\times4}=\left(\begin{array}{cccc}
1/4 & 1/4 & 1/2 & 0\\
1/4 & 1/4 & 1/2 & 0\\
0 & 0 & 1/2 & 1/2\\
0 & 0 & 1/2 & 1/2
\end{array}\right).
\]
Fix $r>0$ we set $s:=r/(2r+1)$ implying $2\left(4^{-1}4^{-r}\right)^{s/\left(s+r\right)}=1.$
Let $c_{i,j}=1/8$ for $1\leq i\leq2$, $1\leq j\leq3$, and $c_{33}=c_{34}=c_{43}=c_{44}=2^{-1/s}.$
With $H_{1}:=\{1,2\}$ and $H_{2}:=\{3,4\}$ we have 
\[
A_{H_{1},s}=A_{H_{2},s}=\begin{pmatrix}\left(2^{-(2+2r)}\right)^{s/\left(s+r\right)} & \left(2^{-(2+2r)}\right)^{s/\left(s+r\right)}\\
\left(2^{-(2+2r)}\right)^{s/\left(s+r\right)} & \left(2^{-(2+2r)}\right)^{s/\left(s+r\right)}
\end{pmatrix}.
\]
Clearly $A_{H_{1},s}$, $A_{H_{2},s}$ are irreducible row-stochastic
matrices. Hence, $s_{r}=s=s_{r}(H_{i})$, $i=1,2$. Since $H_{1}\prec H_{2}$,
by Theorem \ref{mthm1}, we conclude that $\underline{Q}_{r}^{s_{r}}(\mu)=\infty$. \end{example}

\end{document}